\theoremstyle{plain}
\newtheorem{theorem}{Theorem}[section]
\newtheorem{lemma}[theorem]{Lemma}
\newtheorem{corollary}[theorem]{Corollary}
\newtheorem{proposition}[theorem]{Proposition}
\theoremstyle{definition}
\newtheorem{definition}[theorem]{Definition}
\newtheorem{example}[theorem]{Example}
\theoremstyle{remark}
\newcommand{\rg}{{\mathscr{R}}}
\newcommand{\nl}{{\mathscr{N}}}
\newcommand{\mc}[1]{\mathcal {#1}}
\newcommand{\dg}{{\dagger}}
\newcommand{\bcr}{{\texttt{bcirc}}}
\newcommand{\uf}{{\texttt{unfold}}}
\newcommand{\fo}{{\texttt{fold}}}
\newcommand{\rank}{{\text{rank}}_{t}}
\newcommand{\n}{{*_N}}
\newcommand{\cpqn}{{\mathbb{C}}^{p\times q\times n}}
\newcommand{\cpkn}{{\mathbb{C}}^{p\times k\times n}}
\newcommand{\ckqn}{{\mathbb{C}}^{k\times q\times n}}
\newcommand{\ckpn}{{\mathbb{C}}^{k\times p\times n}}
\newcommand{\cqkn}{{\mathbb{C}}^{q\times k\times n}}
\begin{document}


\title{Computation of outer inverse of tensors based on $t$-product}

\author{
\name{Ratikanta Behera,\textsuperscript{a} Jajati Keshari Sahoo\textsuperscript{b} and Yimin Wei\textsuperscript{c}\thanks{\textsuperscript{a}Email: ratikanta@iisc.ac.in,~\textsuperscript{b}Email: jksahoo@goa.bits-pilani.ac.in\\~\textsuperscript{c}Corresponding Author (Yimin Wei). Email: ymwei@fudan.edu.cn}}
\affil{\textsuperscript{a} Department of Computational and Data Sciences, Indian Institute of Science, Bangalore, Karnataka, India \\\textsuperscript{b} Department of Mathematics, BITS Pilani, K.K. Birla Goa Campus, Zuarinagar, Goa, India\\\textsuperscript{c}  School of Mathematics Sciences and Shanghai Key Laboratory of Contemporary Applied Mathematics, Fudan University, Shanghai 200433, P.R. China}
}

\maketitle

\begin{abstract}
Tensor operations play an essential role in various fields of science and engineering, including multiway data analysis. In this study, we establish a few basic properties of the range and null space of a tensor using block circulant matrices and the discrete Fourier matrix. We then discuss the outer inverse of tensors based on $t$-product with a prescribed range and kernel of third-order tensors. We address the relation of this outer inverse with other generalized inverses, such as the Moore-Penrose inverse, group inverse, and Drazin inverse. In addition, we present a few algorithms for computing the outer inverses of the tensors. In particular, a $t$-QR decomposition based algorithm  is developed for computing the outer inverses. 
\end{abstract}

\begin{keywords}
Outer inverse, Moore-Penrose inverse, Drazin inverse, $t$-QR decomposition, $t$-product\\
{\bf Mathematics Subject Classifications} 15A69, 15A09
\end{keywords}

\section{Introduction and motivations}
A tensor is a generalization of a matrix and vectors, and has become an active area of research in various fields, including signal processing \cite{huang2019tensor}, machine learning \cite{Han2023T,sidiropoulos2017tensor}, and analytical chemistry \cite{kroonenberg2008applied}. Furthermore, tensor theories are encountered in a number of fields of practical interest, such as the spectral theory of hypergraphs \cite{ChenQiZhang17, CooperDu12}, eigenvalues of tensors \cite{BuWeiLi15,Che2017pseudo,Ding2015generalized, Ma2023noda,QiLuo17}, eigenvalues of nonnegative tensors \cite{ChangKelly11, CheBuQiWei}, tensor inversion, solutions of multilinear systems \cite{BraliNT13,Ding2016solving,sun2018,Wang2022predefined} and tensor complementarity problems
\cite{Che2019stochastic,Wang2022randomized,Wang2023fixed,Wei2023neural}. However, tensors based on different tensor products have been frequently investigated in recent years, including Einstein products \cite{ein, BraliNT13,Ji2018Drazin,Ji2020outer}, {\it t}-products \cite{Braman10, Kilmer13, Martin13}, Shao products \cite{Shao13}, and $n$-mode products \cite{BaderKolda07}. This study focuses on one such class, tensors in the framework of {\it t}-products, introduced by Kilmer and Martin \cite{Braman10}, which has significantly affected many areas of science and engineering (e.g., image and signal processing \cite{KimNing13}, robust tensor PCA \cite{liu2018improved}, computer vision \cite{HaoBraman13}, tensor computations \cite{Che2022an,Chen2023perturbation,Chen2022tensor,Mo2020time,Wang2023solving,Wang2023fixed,Wei2023neural}, date completion and denoising \cite{ZhengYong21, HuYuan17}). The {\it t}-product takes advantage of the fact that  high-order tensors transform, with the help of the discrete Fourier matrix,  into two-dimensional arrays (matrix) via a recursive unfolding process, and tensors enjoy a linear algebraic-like structure.

The purpose of this study is to use a linear algebraic-like structure concept to compute the outer inverse of tensors with a prescribed range and/or kernel \cite{Stanimirovi2022new,Wei1998characterization}. It is worth recalling the outer inverse using matrix. According to standard notation, $\mathbb{K}^{m\times n}$ represents the set of $m \times n$ real ($\mathbb{R}^{m\times n}$) or complex ($\mathbb{C}^{m\times n}$) matrices. A matrix $ X \in \mathbb{C}^{m\times n}$ is an outer  (respectively inner) inverse of  $ A \in \mathbb{C}^{n\times m}$, if the equality $XAX = X$ (respectively $AXA = A$) is satisfied. Let ${A}\{\lambda\}$ be the set of all the outer inverses of ${A}$, that is, ${A}\{\lambda\} = \{ X | XAX= X\}$. The main application of the outer inverses is to solve multilinear systems. Such an application of outer inverses generalizes the application of ordinary inverses. One can find Several representations of outer inverses in \cite{Ben-Israel} for efficient computations and properties.  Furthermore, if a matrix ${X}$ from the set ${A}\{\lambda\}$  whose range and kernel satisfy $R({X})=R({B})$, respectively, $N({X}) = N({C})$ is denoted by ${A}^{(\lambda)}_{R({B}),*}$ (resp. ${A}^{(\lambda)}_{*,N({C})}$). For detailed expositions of  the generalized inverses of matrices, including the computational aspects and applications, we refer to the following  books: Ben-Israel and Greville \cite{Ben-Israel}, Rao and Mitra \cite{raobook},  and Campbell and Meyer \cite{Campbell91}.  Wei et al. \cite{Weibook}, and Nashed \cite{nashed2014generalized, Nashed87}.

In this paper, we focus on the outer inverses of third-order tensors with prescribed ranges and/or kernels and present effective algorithms using $t$-QR decomposition for computations. Outer inverses involve many well-known generalized inverses such as the Moore-Penrose inverse \cite{Cong2022acute,Miao2020generalized}, the Drazin \cite{miao2020t}, Core-EP inverse \cite{Cong2022characterization,Liu2021perturbation}, and the group inverse. An additional motivation for investigating outer inverses is to compute efficiently.

The global organization of this study is briefly described as follows.  Section 2 is devoted to tensor notations and definitions along with a review of existing results.  Section 3 discusses the computation of the outer inverses of tensors with a prescribed range and/or kernel of third-order tensors along with effective algorithms for computing the outer inverses of tensors using $t$-QR decomposition. Finally, concluding observations and a preview of the possibilities for further work are presented in Section 4.

\section{Preliminaries}\label{SecPreliminaries}

Let $\mc{S} \in \mathbb{C}^{p\times q\times n}$  and $\mc{T} \in \mathbb{C}^{q\times l\times n}$ be two tensors. For $k=1,2,\cdots n$ the frontal faces of the above tensors is defined as $S_{k} = \mc{S}(:, :, k) \in  \mathbb{C}^{p\times q}$ and $T_{k} = \mc{T}(:, :, k) \in  \mathbb{C}^{q\times l}$. The authors of \cite{kilmer11} are defined the operations of $\bcr: \mathbb{C}^{p \times q\times n} \mapsto  \mathbb{C}^{pn\times qm}$ , 
$\uf: \mathbb{C}^{p \times q\times n} \mapsto  \mathbb{C}^{pq\times n}$ 
and $\fo$ operations as follows:

\begin{equation*}
    {\bcr}\left(
     \mc{T}
\right) := \begin{pmatrix}
{T}_{1} & {T}_{n} &\cdots & {T}_{2}\\
{T}_{2} & {T}_{1}    & \cdots & {T}_{3}\\
\vdots  &   \vdots  &    \ddots & \vdots\\
{T}_{n}& {T}_{n-1}&  \cdots & {T}_{1}
 \end{pmatrix}, ~~
{\uf}\left(
     \mc{T}
\right) := \begin{pmatrix}
{T}_{1} \\
{T}_{2} \\
\vdots  \\
{T}_{n}
 \end{pmatrix}, 
\end{equation*}
following the above definition, it is easy to verify 
\begin{equation*}
\fo(\uf(\mc{T}))=\mc{T} =\bcr^{-1}(\bcr(\mc{T})),
\end{equation*}
where $\fo$ is the inverse operation of $\uf$ and $\bcr^{-1}$ is the inverse operation of $\bcr$, i.e., $\bcr^{-1}: \mathbb{C}^{pn\times qm} \mapsto  \mathbb{C}^{p \times q\times n}$.  Now product of two tensors is defined \cite{Martin13} with the help of `$\bcr$' and `$\fo$'.
\begin{definition}
    Let  $\mc{S} \in \mathbb{C}^{p\times q\times n}$ and $\mc{T} \in \mathbb{C}^{q\times l\times n}$ be two tensors. The product of two tensors defined by $\mc{C} = \mc{S}*\mc{T} \in \mathbb{C}^{p\times l\times n}$ such that
  \begin{equation*}
      \fo(\bcr(\mc{S})\cdot \uf(\mc{T})) = \mc{C} =
      \bcr^{-1}(\bcr(\mc{S})\cdot \bcr(\mc{T})), 
      \end{equation*}
      where `$\cdot$' is the simple matrix multiplication.
\end{definition}

It is worth mentioning that the block circulant matrices can be diagonalized with the help of normalized discrete Fourier transform (DFT) matrix. Specifically, for 
$\mc{T} \in \mathbb{C}^{p\times q \times n}$, then with the help of $\bcr(\mc{T}) \in \mathbb{R}^{np\times nq \times n}$  and DFT once can write
\begin{equation}\label{FFTEq}
    (F_{n} \otimes I_{n})\cdot 
    {\bcr}
     (\mc{T}) \cdot (F_{n}^* \otimes I_{n})=
    \begin{pmatrix}
     {D}_1 &  &  & \\
     & {D}_2 &  & \\
     & &  \ddots &\\ 
    & &  & {D}_{n}
    \end{pmatrix}=\textbf{fft}(\mc{T}, [~], 3)
\end{equation}
where $\textbf{fft}(\mc{T}, [~], 3)$ is fast Fourier transform in third direction and $F_{n}$ is the $n\times n$ DFT matrix defined as 
\begin{equation*}
   F_{n}=\frac{1}{\sqrt{{n}}} \begin{bmatrix}
    1 & 1 & 1 & \cdots & 1\\
    1 & w & w^2 &\cdots& w^{{n}-1} \\
    1 & w^2 & w^4 &\cdots & w^{2({n}-1)}\\
    \vdots & \vdots & \vdots& \ddots & \vdots \\
    1 & w^{({n}-1)} & w^{2({n}-1)} &\cdots& w^{({n}-1)({n}-1)}
    \end{bmatrix} \in \mathbb{C}^{n\times n}
\end{equation*}
where $w=\exp{\left(-\frac{2 \imath\pi}{{n}}\right)}$  as a primitive $n$-th root of unity and $\imath =\sqrt{ -1}$. 
On the other hand, 
\begin{equation*}
    {\bcr}
     (\mc{T})  =
 (F_{n}^* \otimes I_{n})\cdot 
    \begin{pmatrix}
     {D}_1 &  &  & \\
     & {D}_2 &  & \\
     & &  \ddots &\\ 
    & &  & {D}_{n}
    \end{pmatrix}  \cdot (F_{n} \otimes I_{n}).
\end{equation*}

A tensor $\mc{T} \in \mathbb{C}^{m\times m\times n}$ is called an {\it identity tensor} if first frontal slice of $\mc{T}$ is an identity matrix of order $m$ and other frontal slices are zero matrices. The identity tensor $\mc{I} \in \mathbb{C}^{m\times m\times n}$ is denoted by $\mc{I}_{mmn}$. Equivalently, a tensor $\mc{T} \in \mathbb{C}^{m\times m\times n}$ is called  an {\it identity tensor} if $\bcr(\mc{T})$ is an identity matrix of order $mn$. For a tensor $\mc{T} \in \mathbb{R}^{m\times m\times n}$ there exist a tensor $\mc{X}$ such that 
  \begin{equation*}
  \mc{T} *\mc{X}=\mc{X}*\mc{T}=\mc{I}_{mmn},
  \end{equation*}
  here, $\mc{X}$ is the inverse of $\mc{T}$ and its denoted  by $\mc{T}^{-1}$. Following the $\bcr$ operation for an invertiable tensor $\mc{T}$, the above equality is equivalent to 
  \begin{equation*}
      \bcr(\mc{T})\cdot \bcr(\mc{T}^{-1}) = {I}_{np} = \bcr(\mc{T}) \cdot \bcr(\mc{T})^{-1}.  
  \end{equation*}
We subsequently, discuss the following useful properties of $\bcr$ and $\bcr^{-1}$ operation in the following lemma. 

\begin{lemma}
Let $\mc{S}, \mc{T} \in \mathbb{C}^{p\times q\times n}$. Then 
\begin{enumerate}
 \item[\rm (a)] $\bcr(\alpha_1\mc{S}+\alpha_2\mc{T}) = \alpha_1\bcr(\mc{S}) +\alpha_2\bcr(\mc{T})$, where $\alpha_1$ and $\alpha_2$ are scalars.
        \item[\rm (b)] $\bcr(\mc{S}*\mc{T}) = \bcr(\mc{S})* \bcr(\mc{T})$
         \item[\rm (c)] $(\bcr(\mc{S}))^{-1} = \bcr(\mc{S}^{-1})$
    \item[\rm (d)] $(\bcr(\mc{S}))^* = \bcr{(\mc{S}^*)}$, where $\mc{S}^*$ is the transpose conjugate of $\mc{S}$.
\end{enumerate}
\end{lemma}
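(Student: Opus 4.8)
The plan is to verify each of the four identities by passing through the block-circulant representation, where everything reduces to standard matrix facts. The unifying observation is that $\bcr$ is a linear, multiplicative, injective map onto the space of $pn\times qn$ block-circulant matrices, with inverse $\bcr^{-1}$ given by reading off the first block column (and this is exactly how $\fo\circ\uf$ was set up in the excerpt). So for each claimed identity I would show both sides are block-circulant matrices with the same first block column, or, even more directly, exploit that $\bcr$ is a bijection onto block-circulant matrices intertwining the tensor operations with the corresponding matrix operations.

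For part (a), linearity: writing $\alpha_1\mc{S}+\alpha_2\mc{T}$ frontal-slice-wise gives frontal slices $\alpha_1 S_k+\alpha_2 T_k$, and since the $(i,j)$ block of $\bcr(\cdot)$ depends linearly (in fact, is a selection) on the frontal slices, the block-circulant matrix of the sum is the corresponding linear combination of the two block-circulant matrices. This is immediate from the explicit formula for $\bcr$. For part (b), multiplicativity: this is essentially a restatement of the definition of $*$. Indeed, $\mc{S}*\mc{T}=\bcr^{-1}(\bcr(\mc{S})\cdot\bcr(\mc{T}))$, so applying $\bcr$ to both sides yields $\bcr(\mc{S}*\mc{T})=\bcr(\mc{S})\cdot\bcr(\mc{T})$; the only genuine content is that the product of two block-circulant matrices is again block-circulant, so that $\bcr^{-1}$ may legitimately be applied — and this is standard, most transparently seen by conjugating with the Fourier matrix as in~\eqref{FFTEq}, since block-circulant matrices are exactly those block-diagonalized by $(F_n\otimes I)$ and a product of block-diagonal matrices is block-diagonal. (One should be mildly careful with sizes: $\bcr(\mc{S})\in\mathbb{C}^{pn\times qn}$ and $\bcr(\mc{T})\in\mathbb{C}^{qn\times ln}$ when $\mc{S}\in\mathbb{C}^{p\times q\times n}$, $\mc{T}\in\mathbb{C}^{q\times l\times n}$; in the lemma both tensors are taken $p\times q\times n$, so for (b) to typecheck one reads it with $q=p$, or simply states it for conformable sizes.)

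For part (c), the inverse: assume $\mc{S}$ is invertible, so by definition $\mc{S}*\mc{S}^{-1}=\mc{S}^{-1}*\mc{S}=\mc{I}$. Apply $\bcr$ and use part (b) together with the fact, noted in the excerpt, that $\bcr(\mc{I})=I_{pn}$; this gives $\bcr(\mc{S})\cdot\bcr(\mc{S}^{-1})=\bcr(\mc{S}^{-1})\cdot\bcr(\mc{S})=I_{pn}$, i.e.\ $\bcr(\mc{S}^{-1})=(\bcr(\mc{S}))^{-1}$. For part (d), the conjugate transpose: here I would work from the explicit block structure. The tensor transpose conjugate $\mc{S}^*$ is defined by conjugate-transposing each frontal slice and then reversing the order of slices $2,\dots,n$; substituting these slices into the formula for $\bcr$ and comparing with the ordinary conjugate transpose of the block-circulant matrix $\bcr(\mc{S})$ — which transposes each block and reflects the block indices — one sees the two arrays coincide. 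Alternatively, and perhaps more cleanly, conjugate-transpose the Fourier diagonalization: $(\bcr(\mc{S}))^*=(F_n^*\otimes I)\,\mathrm{diag}(D_k)^*\,(F_n\otimes I)=(F_n^*\otimes I)\,\mathrm{diag}(D_k^*)\,(F_n\otimes I)$, which is again block-circulant, and its defining tensor is precisely $\mc{S}^*$.

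I expect part (d) to be the only step requiring genuine care, since it hinges on matching two combinatorial index-reversal conventions (the tensor-transpose slice reversal versus the block-reflection in a circulant matrix); once the bookkeeping of which slice lands in which block is pinned down, it is routine. Parts (a)--(c) are essentially formal consequences of the definitions of $\bcr$, $*$, and $\mc{I}$, plus the elementary linear algebra of (block-)circulant matrices.
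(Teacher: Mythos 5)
Your argument is correct, and in fact the paper offers no proof of this lemma at all -- it is stated as a collection of routine facts about $\bcr$ -- so there is nothing to diverge from: your verification simply supplies the standard argument one would expect. Parts (a) and (b) are, as you say, immediate from the explicit block formula for $\bcr$ and from the definition $\mc{S}*\mc{T}=\bcr^{-1}(\bcr(\mc{S})\cdot\bcr(\mc{T}))$, with the only substantive point being closure of block-circulant matrices under multiplication (your Fourier-diagonalization justification is the clean way to see it); part (c) follows by applying (b) to $\mc{S}*\mc{S}^{-1}=\mc{S}^{-1}*\mc{S}=\mc{I}$ together with $\bcr(\mc{I})=I_{pn}$, under the implicit hypotheses (square slices, $\mc{S}$ invertible) that you correctly make explicit. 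Your reading of the right-hand side of (b) as ordinary matrix multiplication, and the remark that the sizes only typecheck for conformable tensors, fix genuine sloppiness in the statement as printed. For (d), the slice-reversal bookkeeping you describe does close: the $(i,j)$ block of $(\bcr(\mc{S}))^*$ is $S_{j-i+1 \bmod n}^*$, while the $(i,j)$ block of $\bcr(\mc{S}^*)$ is $(\mc{S}^*)_{i-j+1\bmod n}=S_{n-(i-j+1)+2}^*$ for $i\neq j$, and these indices agree modulo $n$; equivalently, in the Fourier picture the diagonal blocks of $\mc{S}^*$ are exactly the $D_k^*$, as you indicate. So the proposal is complete up to that one routine index check, which you have correctly identified as the only place requiring care.
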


\begin{lemma}
Let ${S}, {T} \in \mathbb{C}^{pn\times qn}$. Then 
\begin{enumerate}
 \item[\rm (a)] $\bcr^{-1}(\alpha_1S+\alpha_2T) = \alpha_1\bcr^{-1}({S}) +\alpha_2\bcr^{-1}({T})$,  where $\alpha_1$ and $\alpha_2$ are scalars. 
       \item[\rm (b)] $\bcr^{-1}({S}*{T}) = \bcr^{-1}({S})* \bcr^{-1}({T})$.
        \end{enumerate}
\end{lemma}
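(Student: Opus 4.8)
The plan is to prove the second lemma (the one about $\bcr^{-1}$) by reducing each identity to the corresponding statement for the forward operation $\bcr$, which is already available in the preceding lemma. The key structural observation is that $\bcr$ and $\bcr^{-1}$ are mutually inverse bijections between $\mathbb{C}^{p\times q\times n}$ and $\mathbb{C}^{pn\times qn}$; in particular every matrix $S \in \mathbb{C}^{pn\times qn}$ can be written uniquely as $S = \bcr(\mc{S})$ for some third-order tensor $\mc{S} = \bcr^{-1}(S)$, and likewise $T = \bcr(\mc{T})$ with $\mc{T} = \bcr^{-1}(T)$. So the first step is simply to introduce this notation: set $\mc{S} := \bcr^{-1}(S)$ and $\mc{T} := \bcr^{-1}(T)$, so that $S = \bcr(\mc{S})$ and $T = \bcr(\mc{T})$.

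For part (a), I would apply $\bcr^{-1}$ to both sides after rewriting: $\alpha_1 S + \alpha_2 T = \alpha_1 \bcr(\mc{S}) + \alpha_2 \bcr(\mc{T}) = \bcr(\alpha_1 \mc{S} + \alpha_2 \mc{T})$ by part (a) of the previous lemma. Applying $\bcr^{-1}$ then gives $\bcr^{-1}(\alpha_1 S + \alpha_2 T) = \alpha_1 \mc{S} + \alpha_2 \mc{T} = \alpha_1 \bcr^{-1}(S) + \alpha_2 \bcr^{-1}(T)$, which is exactly the claim. For part (b), one should first note that the product ``$*$'' appearing on matrices here must be interpreted consistently — since $\bcr(\mc{S}*\mc{T}) = \bcr(\mc{S})*\bcr(\mc{T})$ is stated in the previous lemma and ordinary matrix multiplication satisfies $\bcr(\mc{S})\cdot\bcr(\mc{T}) = \bcr(\mc{S}*\mc{T})$ by the definition of the $t$-product, the operation $*$ on block circulant matrices coincides with ordinary matrix multiplication $\cdot$ on that subspace. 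Then $S*T = \bcr(\mc{S})*\bcr(\mc{T}) = \bcr(\mc{S}*\mc{T})$, and applying $\bcr^{-1}$ yields $\bcr^{-1}(S*T) = \mc{S}*\mc{T} = \bcr^{-1}(S)*\bcr^{-1}(T)$.

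The main subtlety — what I would flag as the one genuine obstacle rather than a routine computation — is making sure the operations on the matrix side are well-defined on the relevant objects. Part (b) only makes sense because the set of block circulant matrices arising as $\bcr$ of some tensor is closed under the operation $*$ (equivalently under ordinary matrix multiplication, for conformable sizes), so that $S*T$ again lies in the image of $\bcr$ and $\bcr^{-1}(S*T)$ is defined; if $S$ and $T$ were arbitrary matrices of those dimensions the statement would be false. So the careful version of the argument records that $\bcr$ is a bijection onto the subspace of block circulant matrices, that this subspace is closed under scalar combination and under the product, and only then performs the two one-line reductions above. Everything else is immediate from the previous lemma and the definition of $\bcr^{-1}$ as the two-sided inverse of $\bcr$.
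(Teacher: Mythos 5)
Your proof is correct. The paper states this lemma without any proof at all, so there is nothing to compare against; your reduction to the preceding lemma via the fact that $\bcr$ and $\bcr^{-1}$ are mutually inverse bijections onto the block circulant matrices is precisely the natural (and surely intended) argument, and your caveat that $S$ and $T$ must lie in the image of $\bcr$ — with the matrix ``$*$'' read as ordinary matrix multiplication, under which that image is closed — for part (b) to be meaningful is a point the paper's own statement glosses over.
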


\begin{definition}[\cite{Kilmer13, miao2020t}]
Let  $\mc{T} \in \mathbb{R}^{p\times q\times n}$. Then $t$-range space and  $t$-null space and $t$-rank of $\mc{T}$ are defined below using $\bcr$ operation. 
 \begin{enumerate}
     \item[(a)] The $t$-range space of $\mc{T}$ is denoted by $\rg(\mc{T})$ and defined as $\rg(\mc{T})=R(\bcr(\mc{T}))$, where $R(T)$ stands for range space of the matrix $T$.
          \item[(b)] The $t$-null space of $\mc{T}$ is denoted by $\nl(\mc{T})$ and defined as $\nl(\mc{T})=N(\bcr(\mc{T}))$, where $N(T)$ stands for null space of the matrix $T$.
              \item[(c)] The norm of a tensor $\mc{T}$ is defined as $||\mc{T}||_2=||\bcr(\mc{T})||_2$ and 
              \begin{center}
                $\displaystyle ||\mc{T}||_F=\sqrt{\sum_{i=1}^p\sum_{j=1}^q\sum_{k=1}^n|a_{ijk}|^2}$.  
              \end{center}
              \item[(d)] The $t$-rank of a tensor $\mc{T}$ is denoted by $\rank(\mc{T})$ and defined as $\rank(\mc{T})=\text{rank}(\bcr(\mc{T}))$.               
              \item[(e)] The $t$-index of a tensor $\mc{T}$ is denoted by $ind_{t}(\mc{T})$ and defined as $ind_t(\mc{T})=ind(\bcr(\mc{T}))$, where $ind(T)$ is the smallest non negative integer $k$ satisfying $\text{rank}({T}^k)=\text{rank}({T}^{k+1})$. 
            \item[(e)] The condition number of a tensor $\mc{T}$ is defined as $k(\mc{T})=\|\mc{T}\|\cdot\|\mc{T}^{-1}\|$.
 \end{enumerate}
\end{definition}
\begin{table}[!htp]\label{Gtable}
\begin{center}
\caption{\small Equations required to define generalized inverses of tensors.}
\vspace{.3cm}
\begin{tabular}{|c|l|c|l|c|l|c|l|}
 \hline
Label & Equation  & Label & Equation & Label & Equation \\\hline 
$(1)$ & $\mc{T}*\mc{X}*\mc{T}=\mc{T}$  & $(2)$ & $\mc{X}*\mc{T}*\mc{X}=\mc{X}$ & $(4)$ & $(X*A)^*=X*A$ \\\hline
$\left(1^k\right)$ &   $\mc{T}^{k+1}*\mc{X}=\mc{T}^k$ & $(3)$ & $(\mc{T}*\mc{X})^*=\mc{T}*\mc{X}$ &$(5)$ &  $\mc{T}*\mc{X} = \mc{X}*\mc{T}$ \\\hline
\end{tabular}
\end{center}
\end{table}
Here we use the notation $\mc{T}^{(\lambda)}$ for an element of $\{\lambda \}$-inverse of $\mc{T}$ and $\mc{T}\{\lambda\}$ for the set of all $\{\lambda \}$-inverses of $\mc{T}$, where $\lambda\in \{1, 1^k, 2, 3, 4,5 \}$. For instance, a tensor $\mc{X} \in \mathbb{C}^{q\times p\times n} $ is called a  $\{1\}$-inverse of $\mc{T} \in \mathbb{C}^{p\times q\times n}$, if $X$ satisfies $(1)$, i.e., $\mc{T}*\mc{X}*\mc{T}=\mc{T}$ and we denote $X$ by $T^{(1)}$.  Further, if a tensor $\mc{Y}$ from the set $\mc{T}\{\lambda\}$ satisfying $\rg(\mc{X})=\rg(\mc{B})$ (resp. $\nl(\mc{X}) = \nl(\mc{C})$ is denoted by $\mc{T}^{(\lambda)}_{\rg(\mc{B}),*}$ (resp. $\mc{T}^{(\lambda)}_{*,\nl(\mc{C})}$). For $\mc{T} \in \mathbb{R}^{p\times q\times n}$ if  $\mc{X} \in \mc{T}\{1, 2,3,4\}$, then  $\mc{X}$ is called 
the Moore-Penrose inverse \cite{Miao2020generalized} of $\mc{T}$ and denoted by $\mc{T}^\dagger$. Similarly, if $ind(\mc{T}) = k$ and $\mc{X} \in \mc{T}\{1^k, 2,5\}$ , then  $\mc{X}$ is called  the Drazin inverse \cite{Cui2021perturbation,miao2020t} of $\mc{T}$. In particular, when $k=1$, we call $\mc{X}$ the Group inverse of $\mc{T}$ and denoted by $\mc{T}^{\#}$. The Drazin inverse is denoted by $\mc{T}^D$.  On the other hand, we can write $\{1\}$-inverse of $\mc{T} \in \mathbb{C}^{p\times q\times n}$ in the frame work of $\bcr$ and $\bcr^{-1}$ operations.

\begin{proposition}
Let $\mc{T} \in \mathbb{C}^{p\times q\times n}$. Then $\mc{T}^{(1)} = \bcr^{-1}[\bcr(\mc{T})^{(1)}]$.
\begin{proof} 
The proof follows from the below expression.
    \begin{eqnarray*}
&&    \mc{T}*\bcr^{-1}[\bcr(\mc{T})^{(1)}]*\mc{T} \\ & &\hspace{2.5cm}= \bcr^{-1}(\bcr(\mc{T}))* \bcr^{-1}[\bcr(\mc{T})^{(1)}]* \bcr^{-1}(\bcr(\mc{T})),\\ 
   & &\hspace{2.5cm}= \bcr^{-1}[\bcr(\mc{T})(\bcr(\mc{T}))^{(1)}\bcr(\mc{T})]\\
    & &\hspace{2.5cm}= \bcr^{-1}[\bcr(\mc{T})]= \mc{T}.
    \end{eqnarray*}
\end{proof}
\end{proposition}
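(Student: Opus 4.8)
The plan is to verify that the tensor
\[
\mc{X}\;:=\;\bcr^{-1}\!\left[\bcr(\mc{T})^{(1)}\right]
\]
satisfies equation $(1)$ of the table of defining equations above, i.e.\ $\mc{T}*\mc{X}*\mc{T}=\mc{T}$; this is precisely what is needed for $\mc{X}$ to be a $\{1\}$-inverse of $\mc{T}$. First I would fix notation by writing $T:=\bcr(\mc{T})\in\mathbb{C}^{pn\times qn}$ and letting $T^{(1)}\in\mathbb{C}^{qn\times pn}$ be a $\{1\}$-inverse of the \emph{matrix} $T$, so that $T\,T^{(1)}\,T=T$. Then $\bcr^{-1}(T^{(1)})$ lies in $\mathbb{C}^{q\times p\times n}$, which is exactly the right format for a $\{1\}$-inverse of $\mc{T}\in\mathbb{C}^{p\times q\times n}$, so the claimed identity is at least dimensionally consistent.

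The heart of the proof is then a short computation with the homomorphism properties of $\bcr$ and $\bcr^{-1}$. Using the identity $\mc{T}=\bcr^{-1}(\bcr(\mc{T}))$ recalled in Section~\ref{SecPreliminaries} and applying twice the rule $\bcr^{-1}(S\,R)=\bcr^{-1}(S)*\bcr^{-1}(R)$ from the lemma on $\bcr^{-1}$ above, one obtains
\[
\mc{T}*\mc{X}*\mc{T}=\bcr^{-1}(T)*\bcr^{-1}\!\left(T^{(1)}\right)*\bcr^{-1}(T)=\bcr^{-1}\!\left(T\,T^{(1)}\,T\right).
\]
The defining relation $T\,T^{(1)}\,T=T$ of the matrix $\{1\}$-inverse then collapses the right-hand side to $\bcr^{-1}(T)=\mc{T}$, which establishes equation $(1)$ and hence $\mc{X}\in\mc{T}\{1\}$. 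As a cross-check one could instead apply $\bcr$ to the tensor equation $\mc{T}*\mc{X}*\mc{T}=\mc{T}$, using the multiplicativity $\bcr(\mc{S}*\mc{R})=\bcr(\mc{S})\,\bcr(\mc{R})$ of the lemma on $\bcr$ above, to see that $\bcr(\mc{X})$ is a $\{1\}$-inverse of $\bcr(\mc{T})$ for every $\mc{X}\in\mc{T}\{1\}$; so the two sides of the asserted identity genuinely correspond.

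The only delicate point --- and the one I would flag as the main (if mild) obstacle --- is that the expression $\bcr^{-1}\!\left[\bcr(\mc{T})^{(1)}\right]$ only makes sense if the chosen matrix $\{1\}$-inverse $\bcr(\mc{T})^{(1)}$ actually lies in the image of $\bcr$, i.e.\ is block circulant with the appropriate block size. I would resolve this by observing that at least one such block-circulant $\{1\}$-inverse always exists: by the DFT block-diagonalization of $\bcr(\mc{T})$ recalled in Section~\ref{SecPreliminaries}, conjugation by the fixed unitary $F_n\otimes I$ turns $\bcr(\mc{T})$ into a block-diagonal matrix, a blockwise $\{1\}$-inverse of the latter is again block diagonal, and conjugating back produces a block-circulant matrix; in particular $\bcr(\mc{T})^{\dagger}$ is block circulant and is a $\{1\}$-inverse. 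With that understood, the computation above goes through verbatim and nothing more is required.
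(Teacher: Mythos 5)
Your proof is correct and follows essentially the same route as the paper: verify equation $(1)$ for $\bcr^{-1}[\bcr(\mc{T})^{(1)}]$ by using the multiplicativity of $\bcr^{-1}$ to reduce to the matrix identity $T\,T^{(1)}\,T=T$. Your additional remark on why $\bcr(\mc{T})^{(1)}$ can be taken block circulant (via the DFT block-diagonalization) is a well-definedness point the paper leaves implicit, but it does not change the argument.
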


\section{Main results}
We first prove the following auxiliary results, which are used to discuss the outer inverse of tensors based on $t$-product with a prescribed range and kernel of third-order tensors.
\begin{proposition}\label{rgprop}
    Let $\mc{S}\in \cpqn$, $\mc{T}\in\cpkn$ and $\mc{U}\in\ckqn$. Then 
    \begin{enumerate}
        \item[\rm (i)] $\rg(\mc{S})\subseteq \rg(\mc{T})$ if and only if $\mc{S}=\mc{T}*\mc{Z}$ for some $\mc{Z}\in\ckqn$.
\item[\rm (ii)] $\nl(\mc{S})\subseteq \nl(\mc{U})$ if and only if $\mc{U}=\mc{Y}*\mc{S}$ for some $\mc{Y}\in\mathbb{C}^{k\times p\times n}$.
    \end{enumerate}
\end{proposition}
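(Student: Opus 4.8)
The plan is to push everything through the block circulant realization: by the definition of $\rg$ and $\nl$ we have $\rg(\mc{X}) = R(\bcr(\mc{X}))$ and $\nl(\mc{X}) = N(\bcr(\mc{X}))$, and, since $\bcr$ is multiplicative, $\bcr(\mc{X}*\mc{Y}) = \bcr(\mc{X})\bcr(\mc{Y})$. Thus each equivalence becomes a statement about ordinary matrices, subject to the one extra requirement that the matrix witnessing it be block circulant, equivalently of the form $\bcr(\mc{Z})$ for a genuine tensor $\mc{Z}$.

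For (i), the implication $\mc{S}=\mc{T}*\mc{Z}\Rightarrow\rg(\mc{S})\subseteq\rg(\mc{T})$ is immediate from $R(\bcr(\mc{T})\bcr(\mc{Z}))\subseteq R(\bcr(\mc{T}))$. For the converse, the naive argument — express each column of $\bcr(\mc{S})$ as a combination of the columns of $\bcr(\mc{T})$ — produces some matrix $M$ with $\bcr(\mc{S})=\bcr(\mc{T})M$ but gives no reason for $M$ to be block circulant; securing a block circulant witness is the only real obstacle. I would resolve it by taking $\mc{Z}:=\mc{T}^{\dagger}*\mc{S}$, using that the Moore--Penrose inverse $\mc{T}^{\dagger}$ is itself a tensor (one sees that $\bcr(\mc{T})^{\dagger}$ is block circulant by diagonalizing $\bcr(\mc{T})$ with the DFT as recalled above and inverting blockwise). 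Then $\bcr(\mc{T}*\mc{Z})=\bcr(\mc{T})\bcr(\mc{T})^{\dagger}\bcr(\mc{S})$; since $\bcr(\mc{T})\bcr(\mc{T})^{\dagger}$ is the orthogonal projector onto $R(\bcr(\mc{T}))$ and every column of $\bcr(\mc{S})$ already lies in $R(\bcr(\mc{T}))$ by hypothesis, the right-hand side equals $\bcr(\mc{S})$, and applying $\bcr^{-1}$ gives $\mc{S}=\mc{T}*\mc{Z}$. (Alternatively one could average an arbitrary such $M$ over the $n$ block-cyclic shifts to force block circulance, or work blockwise after DFT diagonalization — the same content in different dress.)

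For (ii) I would mirror this. The implication $\mc{U}=\mc{Y}*\mc{S}\Rightarrow\nl(\mc{S})\subseteq\nl(\mc{U})$ is immediate from $N(\bcr(\mc{Y})\bcr(\mc{S}))\supseteq N(\bcr(\mc{S}))$. For the converse, set $\mc{Y}:=\mc{U}*\mc{S}^{\dagger}$; then $\bcr(\mc{Y}*\mc{S})=\bcr(\mc{U})\bcr(\mc{S})^{\dagger}\bcr(\mc{S})$, and writing $\bcr(\mc{S})^{\dagger}\bcr(\mc{S})=I-P$ with $P$ the orthogonal projector onto $N(\bcr(\mc{S}))$, the right-hand side is $\bcr(\mc{U})-\bcr(\mc{U})P$. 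Since $R(P)=N(\bcr(\mc{S}))\subseteq N(\bcr(\mc{U}))$ by hypothesis, $\bcr(\mc{U})P=0$, so the expression equals $\bcr(\mc{U})$ and $\bcr^{-1}$ yields $\mc{U}=\mc{Y}*\mc{S}$. Equivalently, one may observe $\nl(\mc{S})\subseteq\nl(\mc{U})\iff\rg(\mc{U}^{*})\subseteq\rg(\mc{S}^{*})$, by passing to orthogonal complements and using $\bcr(\mc{X}^{*})=\bcr(\mc{X})^{*}$, and then quote part (i). In every case the only nontrivial point is the block circulance of the witness, which is exactly what invoking $\mc{T}^{\dagger}$ (resp.\ $\mc{S}^{\dagger}$) buys us; the remainder is the standard matrix identities $R(AB)\subseteq R(A)$, $N(AB)\supseteq N(B)$, $AA^{\dagger}=P_{R(A)}$, and $A^{\dagger}A=I-P_{N(A)}$.
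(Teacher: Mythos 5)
Your proof is correct, and it follows the same overall route as the paper (reduce everything to $\bcr$ and use matrix range/null space facts), but it differs at the one genuinely delicate step, and in a way that improves on the paper. The paper's converse in (i) simply writes $\bcr(\mc{S})=\bcr(\mc{T})Z$ and then declares $Z=\bcr(\mc{Z})$, i.e.\ it treats the existence of a \emph{block circulant} solution as automatic; as you correctly point out, an arbitrary matrix solution of that equation need not be block circulant, so this is exactly the point that needs an argument. You supply it constructively: take $\mc{Z}=\mc{T}^{\dagger}*\mc{S}$ (resp.\ $\mc{Y}=\mc{U}*\mc{S}^{\dagger}$ in (ii)), observe via the DFT block-diagonalization \eqref{FFTEq} that $\bcr(\mc{T})^{\dagger}$ is again block circulant (one should say ``pseudo-inverting blockwise'' rather than ``inverting,'' since the diagonal blocks need not be square or invertible, but the argument is the same because $F_n\otimes I$ is unitary), and then use $\bcr(\mc{T})\bcr(\mc{T})^{\dagger}=P_{R(\bcr(\mc{T}))}$ and $\bcr(\mc{S})^{\dagger}\bcr(\mc{S})=I-P_{N(\bcr(\mc{S}))}$ together with the range/null space hypotheses to recover $\mc{S}$ and $\mc{U}$ exactly. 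Your dimensions check out ($\mc{T}^{\dagger}*\mc{S}\in\ckqn$, $\mc{U}*\mc{S}^{\dagger}\in\ckpn$), the easy directions are the standard inclusions $R(AB)\subseteq R(A)$ and $N(B)\subseteq N(AB)$ as in the paper, and your alternative derivation of (ii) from (i) via $\nl(\mc{S})\subseteq\nl(\mc{U})\iff\rg(\mc{U}^{*})\subseteq\rg(\mc{S}^{*})$ and $\bcr(\mc{X}^{*})=\bcr(\mc{X})^{*}$ is also valid (the paper just says (ii) is ``similar''). In short: same framework, but your version makes rigorous the witness-construction step that the paper glosses over, at the modest cost of invoking the $t$-product Moore--Penrose inverse.
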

\begin{proof}
(i) If $\mc{S}=\mc{T}*\mc{Z}$ for some $\mc{Z}\in\ckqn$, then $\bcr(\mc{S})=\bcr(\mc{T}*\mc{Z})=\bcr (\mc{T})\cdot\bcr (\mc{Z})$. We can verify that 
\[\rg(\mc{S})=R(\bcr(\mc{S}))\subseteq R(\bcr(\mc{T}))=\rg(\mc{T}).\]
Conversely, let $\rg(\mc{S})\subseteq \rg(\mc{T})$. Then $R(\bcr(\mc{S}))\subseteq R(\bcr(\mc{T}))$ and subsequently we have $\bcr(\mc{S})=\bcr(\mc{T}) Z=\bcr(\mc{T})\bcr(\mc{Z})$, where $Z=\bcr(\mc{Z})$ and $Z\in \mathbb{C}^{kn\times qn}$. Thus $\mc{S}=\bcr^{-1}\bcr(\mc{S})=\bcr^{-1}\left(\bcr(\mc{T})\bcr(\mc{Z})\right)=\mc{T}*\mc{Z}$ for some $\mc{Z}\in\ckqn$.\\
(ii) The proof will be similar to part (i). 
\end{proof}
In view of Proposition \ref{rgprop} (i), we can show the below results. 
\begin{corollary}\label{lcan}
    Let $\mc{S},\mc{T}\in \cpqn$, $\mc{U}\in\mathbb{C}^{s\times k \times n}$, and $\mc{V}\in\mathbb{C}^{k\times p \times n}$. If $\mc{U}*\mc{V}*\mc{S}=\mc{U}*\mc{V}*\mc{T}$ and $\rg(\mc{V}^{\top})=\rg((\mc{U}*
    \mc{V})^{\top})$ then $\mc{V}*\mc{S}=\mc{V}*\mc{T}$.
\end{corollary}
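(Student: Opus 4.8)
The plan is to reduce the claim to a statement about the kernel of $\mc{V}$ acting on $\mc{S}-\mc{T}$, and then convert the range hypothesis into a factorization via Proposition \ref{rgprop}(i) after transposing. First I would put $\mc{W}:=\mc{S}-\mc{T}\in\cpqn$; since the $t$-product is bilinear, the hypothesis $\mc{U}*\mc{V}*\mc{S}=\mc{U}*\mc{V}*\mc{T}$ is exactly $\mc{U}*\mc{V}*\mc{W}=\mc{O}$, and the desired conclusion is exactly $\mc{V}*\mc{W}=\mc{O}$, where $\mc{O}$ denotes the zero tensor of the appropriate size. So it suffices to pass from $\mc{U}*(\mc{V}*\mc{W})=\mc{O}$ to $\mc{V}*\mc{W}=\mc{O}$, i.e. to ``left-cancel'' $\mc{U}$, and the range hypothesis is exactly what makes this legal.

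Next I would use that transposition is an anti-homomorphism for the $t$-product, i.e. $(\mc{U}*\mc{V})^{\top}=\mc{V}^{\top}*\mc{U}^{\top}$; at the level of $\bcr$ this is just that $\bcr$ is multiplicative together with the standard fact that the transpose of a product of block circulant matrices reverses the order. Note that $\rg((\mc{U}*\mc{V})^{\top})=\rg(\mc{V}^{\top}*\mc{U}^{\top})\subseteq\rg(\mc{V}^{\top})$ holds automatically by Proposition \ref{rgprop}(i), so the hypothesis $\rg(\mc{V}^{\top})=\rg((\mc{U}*\mc{V})^{\top})$ is equivalent to the single inclusion $\rg(\mc{V}^{\top})\subseteq\rg((\mc{U}*\mc{V})^{\top})$. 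Applying Proposition \ref{rgprop}(i) to this inclusion, with $\mc{V}^{\top}$ playing the role of $\mc{S}$ and $(\mc{U}*\mc{V})^{\top}$ that of $\mc{T}$, produces a tensor $\mc{Z}\in\mathbb{C}^{s\times k\times n}$ with $\mc{V}^{\top}=(\mc{U}*\mc{V})^{\top}*\mc{Z}=\mc{V}^{\top}*\mc{U}^{\top}*\mc{Z}$.

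Transposing this identity and invoking associativity of the $t$-product gives $\mc{V}=\mc{Z}^{\top}*\mc{U}*\mc{V}$, and the conclusion is then immediate: $\mc{V}*\mc{W}=\mc{Z}^{\top}*\mc{U}*\mc{V}*\mc{W}=\mc{Z}^{\top}*(\mc{U}*\mc{V}*\mc{W})=\mc{Z}^{\top}*\mc{O}=\mc{O}$, hence $\mc{V}*\mc{S}=\mc{V}*\mc{T}$. The only point requiring any care is the transposition bookkeeping — verifying $(\mc{U}*\mc{V})^{\top}=\mc{V}^{\top}*\mc{U}^{\top}$ and checking that the frontal dimensions match so that Proposition \ref{rgprop}(i) really applies (both $\bcr(\mc{V}^{\top})$ and $\bcr((\mc{U}*\mc{V})^{\top})$ have column height $pn$, and the resulting $\mc{Z}$ has the size recorded above). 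Everything else is a routine substitution chain; a fully equivalent alternative would be to run the same argument on block circulant matrices through $\bcr$ and $\bcr^{-1}$, but the tensor formulation keeps the proof self-contained.
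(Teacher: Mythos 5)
Your proof is correct, and it follows essentially the route the paper intends: the paper states this corollary with only the remark that it follows from Proposition \ref{rgprop}(i), and your argument is precisely that derivation, transposing the range hypothesis to obtain $\mc{V}^{\top}=(\mc{U}*\mc{V})^{\top}*\mc{Z}$ via Proposition \ref{rgprop}(i), transposing back to $\mc{V}=\mc{Z}^{\top}*\mc{U}*\mc{V}$, and cancelling on $\mc{W}=\mc{S}-\mc{T}$. The dimension bookkeeping and the identity $(\mc{U}*\mc{V})^{\top}=\mc{V}^{\top}*\mc{U}^{\top}$ (via $\bcr$) are handled correctly, so no gaps remain.
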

\begin{corollary}\label{rcan}
     Let $\mc{S},\mc{T}\in \cpqn$, $\mc{U}\in\cqkn$, and $\mc{V}\in\mathbb{C}^{k\times s \times n}$. If $\mc{S}*\mc{U}*\mc{V}=\mc{T}*\mc{U}*\mc{V}$ and $\rg(\mc{U})=\rg(\mc{U}*
    \mc{V})$ then $\mc{S}*\mc{U}=\mc{T}*\mc{U}$.
\end{corollary}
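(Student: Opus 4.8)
The plan is to run the right-hand analogue of the argument behind Corollary \ref{lcan}, so the key tool is Proposition \ref{rgprop}(i) instead of its null-space companion. The first thing to notice is that the hypothesis $\rg(\mc{U})=\rg(\mc{U}*\mc{V})$ is stronger than we need: only the inclusion $\rg(\mc{U})\subseteq\rg(\mc{U}*\mc{V})$ will be used.

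Concretely, I would first apply Proposition \ref{rgprop}(i) with $\mc{U}\in\cqkn$ in the role of ``$\mc{S}$'' and $\mc{U}*\mc{V}\in\mathbb{C}^{q\times s\times n}$ in the role of ``$\mc{T}$''. Since $\rg(\mc{U})\subseteq\rg(\mc{U}*\mc{V})$, this produces a tensor $\mc{Z}\in\mathbb{C}^{s\times k\times n}$ with $\mc{U}=(\mc{U}*\mc{V})*\mc{Z}=\mc{U}*\mc{V}*\mc{Z}$, the last equality being associativity of the $t$-product (equivalently, associativity of matrix multiplication transported through $\bcr$). Substituting this factorization and regrouping, I obtain $\mc{S}*\mc{U}-\mc{T}*\mc{U}=(\mc{S}-\mc{T})*\mc{U}=(\mc{S}-\mc{T})*\mc{U}*\mc{V}*\mc{Z}=\big(\mc{S}*\mc{U}*\mc{V}-\mc{T}*\mc{U}*\mc{V}\big)*\mc{Z}$. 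The hypothesis $\mc{S}*\mc{U}*\mc{V}=\mc{T}*\mc{U}*\mc{V}$ makes the bracketed tensor vanish, hence $\mc{S}*\mc{U}-\mc{T}*\mc{U}$ is the zero tensor, i.e.\ $\mc{S}*\mc{U}=\mc{T}*\mc{U}$.

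I do not expect any real obstacle; the only care needed is dimensional bookkeeping across the three index patterns so that Proposition \ref{rgprop}(i) hands back a $\mc{Z}$ of the correct shape, and using linearity of $\bcr$ together with its compatibility with $*$ (so that $\bcr(\mc{S}-\mc{T})=\bcr(\mc{S})-\bcr(\mc{T})$ and $\bcr((\mc{S}-\mc{T})*\mc{U})=(\bcr(\mc{S})-\bcr(\mc{T}))\cdot\bcr(\mc{U})$) to justify the regrouping. Alternatively, the entire argument can be carried out on the matrix side: push everything through $\bcr$, turn $t$-products and sums into matrix products and sums, use the elementary fact that $R(\bcr(\mc{U}))\subseteq R(\bcr(\mc{U}*\mc{V}))$ yields $\bcr(\mc{U})=\bcr(\mc{U}*\mc{V})\,W$ for some matrix $W$, multiply the identity $\bcr(\mc{S})\cdot\bcr(\mc{U}*\mc{V})=\bcr(\mc{T})\cdot\bcr(\mc{U}*\mc{V})$ on the right by $W$, and then apply $\bcr^{-1}$.
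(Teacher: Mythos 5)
Your proof is correct and follows exactly the route the paper intends: the paper states this corollary without proof as an immediate consequence of Proposition \ref{rgprop}(i), which is precisely the factorization $\mc{U}=(\mc{U}*\mc{V})*\mc{Z}$ you invoke before substituting and using the hypothesis $\mc{S}*\mc{U}*\mc{V}=\mc{T}*\mc{U}*\mc{V}$. Your side remark that only the inclusion $\rg(\mc{U})\subseteq\rg(\mc{U}*\mc{V})$ is needed is also accurate, since the reverse inclusion holds automatically.
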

\begin{lemma}\label{lemout}
    Let $\mc{S}\in \cpqn$ and $\mc{T}\in\cqkn$. Then the following statements are equivalent:
\begin{enumerate}
    \item[\rm (i)] there exists a tensor $\mc{X}$ such that $\mc{S}^{(2)}_{\rg(\mc{T}),*}=\mc{X}$.
    \item[\rm (ii)] $\mc{T}=\mc{T}*\mc{Z}*\mc{S}*\mc{T}$ for some $\mc{Z}\in \ckpn$.
    \item[\rm (iii)] $\nl(\mc{T})=\nl(\mc{S}*\mc{T})$.
    \item[\rm (iv)] $\mc{T}=\mc{T}*(\mc{S}*\mc{T})^{(1)}*\mc{S}*\mc{T}$.
    \item[\rm (v)] $\rank(\mc{S}*\mc{T})={\rank}(\mc{T})$.
\end{enumerate}
\end{lemma}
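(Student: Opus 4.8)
The plan is to prove the equivalences by reducing everything, via the $\bcr$ operation, to the analogous classical matrix statements about outer inverses with prescribed range, and then transporting those back to tensors using Lemma~2.6, Lemma~2.7, Proposition~2.9 and Proposition~3.1. Concretely, write $S=\bcr(\mc S)$, $T=\bcr(\mc T)$, so that $\rg(\mc T)=R(T)$, $\nl(\mc T)=N(T)$, $\rank(\mc T)=\mathrm{rank}(T)$, and a tensor $\mc X$ satisfies $\mc X=\mc S*\mc X$, $\mc X*\mc S*\mc X=\mc X$, $\rg(\mc X)=\rg(\mc T)$ if and only if $X=\bcr(\mc X)$ is the matrix outer inverse $S^{(2)}_{R(T),*}$. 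Thus each of (i)--(v) is equivalent to its block-circulant matrix counterpart, and it suffices to recall/prove the chain for matrices. Rather than invoke an external theorem wholesale, I would present the cycle (i)$\Rightarrow$(ii)$\Rightarrow$(iv)$\Rightarrow$(iii)$\Rightarrow$(v)$\Rightarrow$(i) directly, since the intermediate steps are short.

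For the individual implications I would proceed as follows. For (i)$\Rightarrow$(ii): from $\rg(\mc X)=\rg(\mc T)$ and Proposition~\ref{rgprop}(i) we get $\mc T=\mc X*\mc W$ and $\mc X=\mc T*\mc V$ for suitable tensors; combining with $\mc X*\mc S*\mc X=\mc X$ yields $\mc T=\mc X*\mc W=\mc X*\mc S*\mc X*\mc W=\mc X*\mc S*\mc T$, and then $\mc T=\mc T*\mc V*\mc S*\mc T$, giving (ii) with $\mc Z=\mc V$. For (ii)$\Rightarrow$(iv): given $\mc T=\mc T*\mc Z*\mc S*\mc T$, observe $\mc S*\mc T=\mc S*\mc T*\mc Z*\mc S*\mc T$, so $\mc Z$ restricts to a $\{1\}$-inverse behaviour on $\mc S*\mc T$; more carefully, $\mc T=\mc T*\mc Z*(\mc S*\mc T)=\mc T*\mc Z*(\mc S*\mc T)*(\mc S*\mc T)^{(1)}*(\mc S*\mc T)$ after using $\mc S*\mc T=(\mc S*\mc T)*(\mc S*\mc T)^{(1)}*(\mc S*\mc T)$ and (ii) again, which collapses to $\mc T=\mc T*(\mc S*\mc T)^{(1)}*\mc S*\mc T$. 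The step (iv)$\Rightarrow$(iii) is immediate: $\nl(\mc S*\mc T)\subseteq\nl(\mc T)$ always holds because if $\bcr(\mc T)x$ has... wait, rather: $\nl(\mc T)\subseteq\nl(\mc S*\mc T)$ is trivial, and (iv) gives the reverse inclusion since any $x\in N(\bcr(\mc S*\mc T))$ satisfies $\bcr(\mc T)x=\bcr(\mc T)\bcr((\mc S*\mc T)^{(1)})\bcr(\mc S*\mc T)x=0$. For (iii)$\Leftrightarrow$(v): over each Fourier block (or directly for $S,T$) $N(T)\subseteq N(ST)$ always, so $N(T)=N(ST)$ iff $\dim N(T)=\dim N(ST)$ iff $\mathrm{rank}(T)=\mathrm{rank}(ST)$, using that $T$ and $ST$ have the same number of columns.

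The genuinely substantive implication is (v)$\Rightarrow$(i) (equivalently (iii)$\Rightarrow$(i)), where one must actually construct the outer inverse. The plan: from $\rank(\mc S*\mc T)=\rank(\mc T)$ and $\nl(\mc T)=\nl(\mc S*\mc T)$, deduce via Corollary~\ref{lcan}-type cancellation that $\mc T=\mc T*(\mc S*\mc T)^{(1)}*\mc S*\mc T$; then define $\mc X:=\mc T*(\mc S*\mc T)^{(1)}*\mc S*(\mc S*\mc T)*(\mc S*\mc T)^{(1)}$ — or more simply $\mc X:=\mc T*(\mc S*\mc T)^{(1)}*\mc S$ after checking it is idempotent-compatible — and verify $\mc X*\mc S*\mc X=\mc X$ and $\rg(\mc X)=\rg(\mc T)$. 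Checking $\mc X*\mc S*\mc X=\mc X$ uses $\mc T=\mc T*(\mc S*\mc T)^{(1)}*\mc S*\mc T$ repeatedly; checking $\rg(\mc X)=\rg(\mc T)$ uses Proposition~\ref{rgprop}(i) in both directions ($\mc X=\mc T*\cdots$ gives $\rg(\mc X)\subseteq\rg(\mc T)$, and $\mc T=\mc T*(\mc S*\mc T)^{(1)}*\mc S*\mc T=\mc X*\mc T$ gives $\rg(\mc T)\subseteq\rg(\mc X)$). I expect the main obstacle to be pinning down the correct closed form of $\mc X$ so that all defining relations of $\mc S^{(2)}_{\rg(\mc T),*}$ hold simultaneously, and making sure the $\{1\}$-inverse of $\mc S*\mc T$ used is consistent across the computation; this is where I would be most careful, possibly by passing to the block-circulant matrices and invoking the classical construction $X=T(ST)^{(1)}S$ with its standard verification, then lifting back through $\bcr^{-1}$.
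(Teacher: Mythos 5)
Most of your cycle is sound and close to the paper's own argument: (i)$\Rightarrow$(ii) via Proposition~\ref{rgprop}(i), your direct (ii)$\Rightarrow$(iv) by substituting $\mc{S}*\mc{T}=(\mc{S}*\mc{T})*(\mc{S}*\mc{T})^{(1)}*(\mc{S}*\mc{T})$ and reusing (ii), the trivial (iv)$\Rightarrow$(iii), and the rank--nullity argument for (iii)$\Leftrightarrow$(v) are all correct. The genuine gap is exactly in the step you yourself flag as the substantive one, the existence direction (iv)/(v)$\Rightarrow$(i): both closed forms you propose for the outer inverse are wrong. With $\mc{S}\in\cpqn$ and $\mc{T}\in\cqkn$, an element of $\mc{S}\{2\}_{\rg(\mc{T}),*}$ must lie in $\mathbb{C}^{q\times p\times n}$, whereas your ``simpler'' candidate $\mc{X}=\mc{T}*(\mc{S}*\mc{T})^{(1)}*\mc{S}$ lies in $\mathbb{C}^{q\times q\times n}$, so $\mc{X}*\mc{S}*\mc{X}$ is not even defined (it is the projector $\bigl(\mc{T}*(\mc{S}*\mc{T})^{(1)}\bigr)*\mc{S}$, not an outer inverse); your longer candidate $\mc{T}*(\mc{S}*\mc{T})^{(1)}*\mc{S}*(\mc{S}*\mc{T})*(\mc{S}*\mc{T})^{(1)}$ does not compose at all, since $\mc{S}\in\cpqn$ cannot be multiplied on the right by $\mc{S}*\mc{T}\in\mathbb{C}^{p\times k\times n}$. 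Your fallback, ``invoke the classical construction $X=T(ST)^{(1)}S$,'' misquotes that construction and so does not repair the step.

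The correct choice --- the one the paper uses --- is simply $\mc{X}=\mc{T}*(\mc{S}*\mc{T})^{(1)}\in\mathbb{C}^{q\times p\times n}$. Given (iv), i.e.\ $\mc{T}=\mc{T}*(\mc{S}*\mc{T})^{(1)}*\mc{S}*\mc{T}$, one gets
\[
\mc{X}*\mc{S}*\mc{X}=\bigl(\mc{T}*(\mc{S}*\mc{T})^{(1)}*\mc{S}*\mc{T}\bigr)*(\mc{S}*\mc{T})^{(1)}=\mc{T}*(\mc{S}*\mc{T})^{(1)}=\mc{X},
\]
while $\rg(\mc{X})\subseteq\rg(\mc{T})$ is immediate from $\mc{X}=\mc{T}*(\mc{S}*\mc{T})^{(1)}$ and $\rg(\mc{T})\subseteq\rg(\mc{X})$ follows from $\mc{T}=\mc{X}*(\mc{S}*\mc{T})$, both via Proposition~\ref{rgprop}(i). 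So the missing ingredient is not a delicate consistency issue about which $\{1\}$-inverse is used, but simply the right formula; with $\mc{X}=\mc{T}*(\mc{S}*\mc{T})^{(1)}$ the verification is three lines, and your remaining implications then close the equivalence as in the paper.
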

\begin{proof}
(i)$\Rightarrow$(ii): Let $\mc{X}=\mc{S}^{(2)}_{\rg(\mc{T}),*}$. Then $\mc{X}*\mc{S}*\mc{X}=\mc{X}$ and $\rg(\mc{X})=\rg(\mc{T})$. Applying Proposition \ref{rgprop} (i), we obtain \[\mc{T}=\mc{X}*\mc{Z}_1=\mc{X}*\mc{S}*\mc{X}*\mc{Z}_1=\mc{X}*\mc{S}*\mc{T}=\mc{T}*\mc{Z}*\mc{S}*\mc{T}\mbox{ for some $\mc{Z}\in\ckpn$.}\] 
(ii)$\Rightarrow$(iii): Let $\mc{T}=\mc{T}*\mc{Z}*\mc{S}*\mc{T}$  some $\mc{Z}\in \ckpn$. In view of Proposition \ref{rgprop} (ii), we have $\nl(\mc{S}*\mc{T})\subseteq \nl(\mc{T})$. The other inclusion $\nl(\mc{T})\subseteq \nl(\mc{S}*\mc{T})$ follows trivially by Proposition \ref{rgprop} (ii).\\
(iii)$\Rightarrow$(iv): Let $\nl(\mc{T}=\nl(\mc{S}*\mc{T})$. Then there exist $\mc{Z}\in $ such that $\mc{T}=\mc{Z}*\mc{S}*\mc{T}$. Further, 
\[\mc{T}=\mc{Z}*\mc{S}*\mc{T}=\mc{Z}*\mc{S}*\mc{T}*(\mc{S}*\mc{T})^{(1)}*\mc{S}*\mc{T}=\mc{T}*(\mc{S}*\mc{T})^{(1)}*\mc{S}*\mc{T}.\]
(iv)$\Rightarrow$(i): Define $\mc{X}=\mc{T}*(\mc{S}*\mc{T})^{(1)}$ and let $\mc{T}=\mc{T}*(\mc{S}*\mc{T})^{(1)}*\mc{S}*\mc{T}=\mc{X}*\mc{S}*\mc{T}$. Then it is easy to verify that 
$\rg(\mc{X})\subseteq \rg(\mc{T})$,  $\rg(\mc{T})\subseteq \rg(\mc{X})$, and 
\[\mc{X}*\mc{S}*\mc{X}=\mc{T}*(\mc{S}*\mc{T})^{(1)}*\mc{S}*\mc{T}*(\mc{S}*\mc{T})^{(1)}=\mc{T}*(\mc{S}*\mc{T})^{(1)}=\mc{X}.\]
(iii)$\Leftrightarrow$(v): It follows from the equivalence between $N(\bcr(\mc{T}))=N(\bcr(\mc{S}*\mc{T}))$ and $\text{rank}(\bcr(\mc{T}))=\text{rank}(\bcr(\mc{S}*\mc{T}))$.
\end{proof}
\begin{theorem}\label{thmrb}
Assume that the statements in Lemma \ref{lemout} are true, then $\mc{S}\{2\}_{\rg(\mc{T}),*}$ will have the following representations:
   \begin{enumerate}
       \item[\rm (i)] $\mc{S}\{2\}_{\rg(\mc{T}),*}=\mc{T}*\mc{Y}=\mc{T}*\mc{Z}$, where $\mc{Y}\in (\mc{S}*\mc{T})\{1\}$ and $\mc{Z}$ satisfying $\mc{T}*\mc{Z}*\mc{S}*\mc{T}=\mc{T}$.
       \item[\rm (ii)] $\mc{S}\{2\}_{\rg(\mc{T}),*}=\mc{T}*(\mc{S}*\mc{T})^{(1)}+\mc{T}*\mc{Z}-\mc{T}*\mc{Z}*\mc{S}*\mc{T}*(\mc{S}*\mc{T})^{(1)},$ where $\mc{Z}$ is arbitrary.
   \end{enumerate}
   \end{theorem}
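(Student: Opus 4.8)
The plan is to derive both representations from two intermediate facts: an ``intrinsic'' description of the whole set $\mc{S}\{2\}_{\rg(\mc{T}),*}$, and the identification of $(\mc{S}*\mc{T})\{1\}$ with a set of tensors that absorb $\mc{T}$ in the right way. First I would prove
\[
\mc{S}\{2\}_{\rg(\mc{T}),*}=\{\mc{T}*\mc{Z} : \mc{T}*\mc{Z}*\mc{S}*\mc{T}=\mc{T}\}.
\]
For ``$\subseteq$'', take $\mc{X}$ with $\mc{X}*\mc{S}*\mc{X}=\mc{X}$ and $\rg(\mc{X})=\rg(\mc{T})$; Proposition \ref{rgprop}(i) applied to both inclusions gives $\mc{X}=\mc{T}*\mc{Z}$ and $\mc{T}=\mc{X}*\mc{W}$ for suitable tensors, and then $\mc{T}=\mc{X}*\mc{W}=\mc{X}*\mc{S}*\mc{X}*\mc{W}=\mc{X}*\mc{S}*\mc{T}=\mc{T}*\mc{Z}*\mc{S}*\mc{T}$. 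For ``$\supseteq$'', if $\mc{T}*\mc{Z}*\mc{S}*\mc{T}=\mc{T}$ then $\mc{X}=\mc{T}*\mc{Z}$ satisfies $\mc{X}*\mc{S}*\mc{X}=\mc{T}*\mc{Z}*\mc{S}*\mc{T}*\mc{Z}=\mc{T}*\mc{Z}=\mc{X}$, while $\rg(\mc{X})\subseteq\rg(\mc{T})$ comes from Proposition \ref{rgprop}(i) and $\rg(\mc{T})\subseteq\rg(\mc{X})$ from $\mc{T}=\mc{X}*(\mc{S}*\mc{T})$.

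Next I would prove the cancellation $\{\mc{Z}:\mc{T}*\mc{Z}*\mc{S}*\mc{T}=\mc{T}\}=(\mc{S}*\mc{T})\{1\}$. The inclusion ``$\subseteq$'' is immediate (left-multiply the defining identity by $\mc{S}$). For ``$\supseteq$'', given $\mc{Y}\in(\mc{S}*\mc{T})\{1\}$, applying $\bcr$ to $\mc{S}*\mc{T}*\mc{Y}*\mc{S}*\mc{T}=\mc{S}*\mc{T}$ yields $\bcr(\mc{S}*\mc{T})(\bcr(\mc{Y}*\mc{S}*\mc{T})-I)=0$, so the columns of $\bcr(\mc{Y}*\mc{S}*\mc{T})-I$ lie in $N(\bcr(\mc{S}*\mc{T}))$, which by Lemma \ref{lemout}(iii) equals $N(\bcr(\mc{T}))$; hence $\bcr(\mc{T})(\bcr(\mc{Y}*\mc{S}*\mc{T})-I)=0$, i.e. $\mc{T}*\mc{Y}*\mc{S}*\mc{T}=\mc{T}$. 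Part (i) then follows: the parametrizing set in the two formulas is the common set $(\mc{S}*\mc{T})\{1\}=\{\mc{Z}:\mc{T}*\mc{Z}*\mc{S}*\mc{T}=\mc{T}\}$, so $\{\mc{T}*\mc{Y}:\mc{Y}\in(\mc{S}*\mc{T})\{1\}\}$ and $\{\mc{T}*\mc{Z}:\mc{T}*\mc{Z}*\mc{S}*\mc{T}=\mc{T}\}$ describe the same set, which by the first displayed equality equals $\mc{S}\{2\}_{\rg(\mc{T}),*}$.

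For (ii) I would use the standard one-parameter description of inner inverses, transferred through $\bcr$: for any $\mc{Z}$, the tensor $\mc{Y}(\mc{Z}):=(\mc{S}*\mc{T})^{(1)}+\mc{Z}-\mc{Z}*\mc{S}*\mc{T}*(\mc{S}*\mc{T})^{(1)}$ lies in $(\mc{S}*\mc{T})\{1\}$ (a one-line check: with $A=\mc{S}*\mc{T}$, $G=(\mc{S}*\mc{T})^{(1)}$ one has $A(G+\mc{Z}-\mc{Z}AG)A=AGA+A\mc{Z}A-A\mc{Z}AGA=A$), and conversely every $\mc{Y}\in(\mc{S}*\mc{T})\{1\}$ equals $\mc{Y}(\mc{Z})$ for $\mc{Z}=\mc{Y}-(\mc{S}*\mc{T})^{(1)}$. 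Left-multiplying by $\mc{T}$ and collapsing $\mc{T}*(\mc{S}*\mc{T})^{(1)}*\mc{S}*\mc{T}$ to $\mc{T}$ via Lemma \ref{lemout}(iv) gives $\mc{T}*\mc{Y}(\mc{Z})=\mc{T}*(\mc{S}*\mc{T})^{(1)}+\mc{T}*\mc{Z}-\mc{T}*\mc{Z}*\mc{S}*\mc{T}*(\mc{S}*\mc{T})^{(1)}$, i.e. exactly the right-hand side of (ii); since $\mc{Y}(\mc{Z})$ exhausts $(\mc{S}*\mc{T})\{1\}$ as $\mc{Z}$ varies, part (i) shows this expression exhausts $\mc{S}\{2\}_{\rg(\mc{T}),*}$ (the reverse identity $\mc{T}*\mc{Y}=\mc{T}*\mc{Y}(\mc{Y}-(\mc{S}*\mc{T})^{(1)})$ again uses Lemma \ref{lemout}(iv) and $\mc{T}*\mc{Y}*\mc{S}*\mc{T}=\mc{T}$ from the previous step to cancel the cross terms).

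I expect the only genuinely non-routine point to be the cancellation step above: one cannot cancel $\mc{S}$ from the left of a $t$-product identity in general, so the argument must pass through the null-space identity $\nl(\mc{T})=\nl(\mc{S}*\mc{T})$ (Lemma \ref{lemout}(iii)), equivalently the rank identity (v). Once that is in place, everything else is a faithful transcription of the classical matrix identities for $\{1\}$- and $\{2\}$-inverses through the structure-preserving maps $\bcr,\bcr^{-1}$ (moving them through sums and products by the preliminary lemmas), together with routine tracking of the tensor sizes, e.g. $\mc{Z}\in\ckpn$.
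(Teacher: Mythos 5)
Your argument is correct, and for part (i) it is essentially the paper's argument: the paper establishes the cycle of inclusions $\mc{S}\{2\}_{\rg(\mc{T}),*}\subseteq\{\mc{T}*\mc{Z}:\mc{T}*\mc{Z}*\mc{S}*\mc{T}=\mc{T}\}\subseteq\{\mc{T}*\mc{Y}:\mc{Y}\in(\mc{S}*\mc{T})\{1\}\}\subseteq \mc{S}\{2\}_{\rg(\mc{T}),*}$, using Corollary \ref{rcan} for the first inclusion where you instead substitute $\mc{T}=\mc{X}*\mc{W}$ and use $\mc{X}=\mc{X}*\mc{S}*\mc{X}$ directly (the same computation as in Lemma \ref{lemout} (i)$\Rightarrow$(ii)); your extra observation that $\{\mc{Z}:\mc{T}*\mc{Z}*\mc{S}*\mc{T}=\mc{T}\}=(\mc{S}*\mc{T})\{1\}$, proved through $\nl(\mc{S}*\mc{T})=\nl(\mc{T})$ via $\bcr$, is not stated in the paper but is sound and makes the equality of the two parametrizations explicit. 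For part (ii) your route differs: the paper applies $\bcr$ to $\mc{T}*\mc{Z}*\mc{S}*\mc{T}=\mc{T}$ and invokes the classical general solution of the consistent matrix equation $AXB=C$, then maps back with $\bcr^{-1}$ and left-multiplies by $\mc{T}$; you stay at the tensor level, using the family $\mc{Y}(\mc{Z})=(\mc{S}*\mc{T})^{(1)}+\mc{Z}-\mc{Z}*\mc{S}*\mc{T}*(\mc{S}*\mc{T})^{(1)}\subseteq(\mc{S}*\mc{T})\{1\}$ together with your cancellation fact. The paper's approach imports a ready-made formula (at the cost of the slightly awkward reuse of the symbol $\mc{Z}$ as both solution and free parameter); yours is self-contained and makes transparent exactly where the hypothesis $\nl(\mc{S}*\mc{T})=\nl(\mc{T})$ enters.

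One caveat: your intermediate assertions that every $\mc{Y}\in(\mc{S}*\mc{T})\{1\}$ equals $\mc{Y}(\mc{Y}-(\mc{S}*\mc{T})^{(1)})$ and that $\mc{Y}(\mc{Z})$ exhausts $(\mc{S}*\mc{T})\{1\}$ are false in general; the one-parameter family exhausting $A\{1\}$ is $GAG+Z-GAZAG$, and $G+Z-ZAG$ can miss $\{1\}$-inverses (e.g.\ $A=G=\mathrm{diag}(1,0)$ as matrices). This does not sink the proof, because your parenthetical replaces the false claim by exactly what is needed, namely $\mc{T}*\mc{Y}=\mc{T}*\mc{Y}(\mc{Y}-(\mc{S}*\mc{T})^{(1)})$, which does hold once you know $\mc{T}*\mc{Y}*\mc{S}*\mc{T}=\mc{T}$ and $\mc{T}*(\mc{S}*\mc{T})^{(1)}*\mc{S}*\mc{T}=\mc{T}$; in a final write-up you should state only this weaker identity (and note that $\mc{T}*\mc{Y}(\mc{Z})$ equals the right-hand side of (ii) by mere distribution, with no appeal to Lemma \ref{lemout}(iv) at that point).
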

   \begin{proof}
       (i) Let $\mc{X}\in \mc{S}\{2\}_{\rg(\mc{T}),*}$. Then $\rg(\mc{X})=\rg(\mc{T})$. Further, $\mc{X}=\mc{T}*\mc{Z}$ for some $\mc{Z}\in\ckpn$. Now $\mc{T}*\mc{Z}=\mc{X}=\mc{X}*\mc{S}*\mc{X}=\mc{T}*\mc{Z}*\mc{S}*\mc{T}*\mc{Z}$. By applying Corollary \ref{rcan}, we obtain $\mc{T}=\mc{T}*\mc{Z}*\mc{S}*\mc{T}$. Hence we conclude that 
       \begin{equation}\label{eqotrg}
          \mc{S}\{2\}_{\rg(\mc{T}),*}\subseteq  \{\mc{T}*\mc{Z}\,|\,\mc{Z}\in\ckpn, \mc{T}*\mc{Z}*\mc{S}*\mc{T}=\mc{T}\}.
       \end{equation}
    From $\mc{T}*\mc{Z}*\mc{S}*\mc{T}=\mc{T}$, we have $\mc{Z}\in (*\mc{S}*\mc{T})^{(1)}$ and hence 
    \begin{equation}\label{eqotrg1}
        \{\mc{T}*\mc{Z}\,|\,\mc{Z}\in\ckpn, \mc{T}*\mc{Z}*\mc{S}*\mc{T}=\mc{T}\}\subseteq \{\mc{T}*\mc{Y}\,|\,\mc{Y}\in (\mc{S}*\mc{T})^{(1)}\}.
    \end{equation}
    By Lemma \ref{lemout}, we get 
    \begin{equation}\label{eqotrg2}
    \{\mc{T}*\mc{Y}\,|\,\mc{Y}\in (\mc{S}*\mc{T})^{(1)}\}\subseteq \mc{S}\{2\}_{\rg(\mc{T}),*}.
\end{equation}
 The proof is complete by virtue of the equations \eqref{eqotrg}, \eqref{eqotrg1}, and \ref{eqotrg2}. \\
(ii) Let $\mc{T}*\mc{Z}*\mc{S}*\mc{T}=\mc{T}$. Then
\begin{equation}\label{bsys}
 \bcr(\mc{T})\bcr(\mc{Z})\bcr(\mc{S})\bcr(\mc{T})=\bcr(\mc{T})   
\end{equation}

From Lemma \ref{lemout} (v), we have  $\text{rank}((\bcr(\mc{S})\bcr(\mc{T}))^{\top})=\text{rank}((\bcr(\mc{T})^{\top})$ and by Corollary \ref{lcan} the expression 
\[\bcr(\mc{S})\bcr(\mc{T})(\bcr(\mc{S})\bcr(\mc{T}))^{(1)}\bcr(\mc{S})\bcr(\mc{T})=\bcr(\mc{S})\bcr(\mc{T})\] yields $\bcr(\mc{T})(\bcr(\mc{S})\bcr(\mc{T}))^{(1)}\bcr(\mc{S})\bcr(\mc{T})=\bcr(\mc{T})$. Thus the system \eqref{bsys} is consistent, and the general solution is given by 
\begin{eqnarray*}
\bcr(\mc{Z})&=&(\bcr(\mc{T}))^{(1)}*\bcr(\mc{T})*(\bcr(\mc{S})\bcr(\mc{T}))^{(1)}+\bcr(\mc{Z})\\
  && \hspace*{-2.5cm} -(\bcr(\mc{T}))^{(1)}*\bcr(\mc{T})*\bcr(\mc{Z})*\bcr(\mc{S})\bcr(\mc{T})*(\bcr(\mc{S})\bcr(\mc{T}))^{(1)}.
\end{eqnarray*}
Applying $\bcr^{-1}$ both sides, we obtain
\[\mc{Z}=\mc{T}^{(1)}*\mc{T}*(\mc{S}*\mc{T})^{(1)}+\mc{Z}-\mc{T}^{(1)}*\mc{T}*\mc{Z}*\mc{S}*\mc{T}*(\mc{S}*\mc{T})^{(1)},\mbox{ and}\]
\begin{equation}\label{eqrb}
    \mc{T}*\mc{Z}=\mc{T}*(\mc{S}*\mc{T})^{(1)}+\mc{T}*\mc{Z}-\mc{T}*\mc{Z}*\mc{S}*\mc{T}*(\mc{S}*\mc{T})^{(1)}.
\end{equation}
The proof follows from equation \eqref{eqrb} and part (i).
\end{proof}
In view of Lemma \ref{lemout} and Theorem \ref{thmrb}, the following results can be proved similarly.
\begin{lemma}\label{lemout1}
Let $\mc{S}\in \cpqn$ and $\mc{T}\in\ckpn$. Then the following statements are equivalent:
\begin{enumerate}
    \item[\rm (i)] there exists a tensor $\mc{X}$ such that $\mc{S}^{(2)}_{*,\nl(\mc{T})}=\mc{X}$.
    \item[\rm(ii)] $\mc{T}=\mc{T}*\mc{S}*\mc{Z}*\mc{T}$ for some $\mc{Z}\in \mathbb{C}^{q \times k\times n}$.
    \item[\rm(iii)] $\rg(\mc{T}*\mc{S})=\rg(\mc{T})$.
    \item[\rm(iv)] $\mc{T}=\mc{T}*\mc{S}*(\mc{T}*\mc{S})^{(1)}*\mc{T}$.
    \item[\rm(v)] $\rank(\mc{T}*\mc{S})={\rank}(\mc{T})$.
\end{enumerate}
\end{lemma}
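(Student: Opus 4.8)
The plan is to mirror the proof of Lemma~\ref{lemout} almost line for line, systematically interchanging the $t$-range space with the $t$-null space, left $*$-multiplication with right $*$-multiplication, and parts (i) and (ii) of Proposition~\ref{rgprop}. Concretely, I would establish the implication cycle (i)$\Rightarrow$(ii)$\Rightarrow$(iii)$\Rightarrow$(iv)$\Rightarrow$(i) and then add the separate equivalence (iii)$\Leftrightarrow$(v). All the auxiliary tensors land in the expected shapes (for instance the $\mc{Z}$ of (ii) must lie in $\cqkn$, since $\mc{T}\in\ckpn$ and $\mc{S}\in\cpqn$), which I would confirm along the way.

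For (i)$\Rightarrow$(ii): from $\mc{X}=\mc{S}^{(2)}_{*,\nl(\mc{T})}$ we have $\mc{X}*\mc{S}*\mc{X}=\mc{X}$ and $\nl(\mc{X})=\nl(\mc{T})$; the two inclusions $\nl(\mc{X})\subseteq\nl(\mc{T})$ and $\nl(\mc{T})\subseteq\nl(\mc{X})$ give, via Proposition~\ref{rgprop}(ii), tensors $\mc{Y},\mc{Z}$ with $\mc{T}=\mc{Y}*\mc{X}$ and $\mc{X}=\mc{Z}*\mc{T}$, whence $\mc{T}=\mc{Y}*\mc{X}=\mc{Y}*\mc{X}*\mc{S}*\mc{X}=\mc{T}*\mc{S}*\mc{X}=\mc{T}*\mc{S}*\mc{Z}*\mc{T}$. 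For (ii)$\Rightarrow$(iii): writing $\mc{T}=(\mc{T}*\mc{S})*(\mc{Z}*\mc{T})$ and applying Proposition~\ref{rgprop}(i) gives $\rg(\mc{T})\subseteq\rg(\mc{T}*\mc{S})$, while $\rg(\mc{T}*\mc{S})\subseteq\rg(\mc{T})$ is immediate. For (iii)$\Rightarrow$(iv): $\rg(\mc{T})\subseteq\rg(\mc{T}*\mc{S})$ yields $\mc{T}=\mc{T}*\mc{S}*\mc{Z}$ for some $\mc{Z}$, and inserting $(\mc{T}*\mc{S})*(\mc{T}*\mc{S})^{(1)}*(\mc{T}*\mc{S})=\mc{T}*\mc{S}$ gives $\mc{T}=\mc{T}*\mc{S}*(\mc{T}*\mc{S})^{(1)}*\mc{T}$. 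For (iv)$\Rightarrow$(i): set $\mc{X}=(\mc{T}*\mc{S})^{(1)}*\mc{T}$; then $\mc{T}=(\mc{T}*\mc{S})*\mc{X}$ (by (iv)) and $\mc{X}=(\mc{T}*\mc{S})^{(1)}*\mc{T}$ give the two inclusions that force $\nl(\mc{X})=\nl(\mc{T})$ by Proposition~\ref{rgprop}(ii), and, using (iv) together with associativity, $\mc{X}*\mc{S}*\mc{X}=(\mc{T}*\mc{S})^{(1)}*\bigl[\mc{T}*\mc{S}*(\mc{T}*\mc{S})^{(1)}*\mc{T}\bigr]=(\mc{T}*\mc{S})^{(1)}*\mc{T}=\mc{X}$, so $\mc{X}=\mc{S}^{(2)}_{*,\nl(\mc{T})}$.

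Finally, (iii)$\Leftrightarrow$(v) follows from $\bcr(\mc{T}*\mc{S})=\bcr(\mc{T})\bcr(\mc{S})$, which forces $R(\bcr(\mc{T}*\mc{S}))\subseteq R(\bcr(\mc{T}))$, so the two spaces coincide precisely when their dimensions — that is, the $t$-ranks $\rank(\mc{T}*\mc{S})$ and $\rank(\mc{T})$ — agree. An alternative, more compact route would be to apply Lemma~\ref{lemout} directly to the pair $(\mc{S}^*,\mc{T}^*)$: using $(\mc{A}*\mc{B})^*=\mc{B}^**\mc{A}^*$, $\bcr(\mc{A}^*)=\bcr(\mc{A})^*$, and the elementary fact that $N(M_1)=N(M_2)$ iff $R(M_1^*)=R(M_2^*)$, one sees that $\mc{X}\mapsto\mc{X}^*$ carries $\mc{S}^{(2)}_{*,\nl(\mc{T})}$ to $(\mc{S}^*)^{(2)}_{\rg(\mc{T}^*),*}$ and turns each of (ii)--(v) here into the corresponding statement of Lemma~\ref{lemout} for $(\mc{S}^*,\mc{T}^*)$, so the equivalences transfer verbatim. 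I do not anticipate a genuine obstacle: the argument is structurally symmetric to Lemma~\ref{lemout}, so the only thing needing real care is bookkeeping — keeping the left/right multiplications and the matching inclusion directions consistent, and (if the transpose route is taken) not conflating the conjugate transpose with the plain transpose when passing between $\nl$ and $\rg$.
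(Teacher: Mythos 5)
Your proof is correct and follows exactly the route the paper intends: the paper gives no separate argument for this lemma, stating only that it "can be proved similarly" to Lemma~\ref{lemout}, and your dualization (swapping $\rg$ with $\nl$, left with right $*$-multiplication, and the two parts of Proposition~\ref{rgprop}) is precisely that argument, with the shapes and the cycle (i)$\Rightarrow$(ii)$\Rightarrow$(iii)$\Rightarrow$(iv)$\Rightarrow$(i) plus (iii)$\Leftrightarrow$(v) handled correctly. The transpose-based alternative you sketch is also sound and needs no further elaboration here.
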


\begin{algorithm}[hbt!]
  \caption{Computation of outer inverse $\mc{X} = \mc{S}^{(2)}_{\rg(\mc{T}), *}$} \label{AlgoTran}
  \begin{algorithmic}[1]
    \Procedure{oinverse}{$\mc{S}$}
    \State \textbf{Input} $\mc{S}\in \cpqn$ and $\mc{T}\in \cqkn$.
      \For{$i \gets 1$ to $n$} 
      \State $\mc{S}=\textup{fft}(\mc{S}, [~ ], i);$ ~ $\mc{T}=\textup{fft}(\mc{T}, [~ ], i);$
      \EndFor
      \For{$i \gets 1$ to $n$} 
       \State $\mc{Z}(:, :, i ) = \mc{S}(:, :, i )\mc{T}(:, :, i );$
      \EndFor
      \For{$i \gets n$ to $1$} 
      \State $\mc{Z} \leftarrow \textup{ifft}(\mc{Z}, [~ ], i);$
      \EndFor
     \If{$\rank{(\mc{Z})} = \rank{(\mc{T})}$}
      \State \textbf{compute} $\mc{Z}^{(1)}$ using Algorithm 2 \cite{RatiJMZ} 
    \EndIf
    \State \textbf{compute} $\mc{T}*\mc{Z}^{(1)}$  
      \State \textbf{return } $\mc{S}^{(2)}_{\rg(\mc{T}), *} = \mc{T}*\mc{Z}^{(1)}$ 
    \EndProcedure
  \end{algorithmic}
\end{algorithm}

\begin{example}\label{Ex1}
Let $ \mathcal{S} \in \mathbb{R}^{2 \times 2 \times 3}$  and $ \mathcal{T} \in \mathbb{R}^{2 \times 3 \times 3}$  
 with frontal slices 
\begin{eqnarray*}
\mc{S}_{1} =
    \begin{pmatrix}
    1 & 1 \\
    -2 & 0
    \end{pmatrix},~
\mc{S}_{2} =
    \begin{pmatrix}
     0 & 1\\
     1 & -2
    \end{pmatrix},~~
    \mc{S}_{3} =
    \begin{pmatrix}
     0 & -1 \\
     1 & 2
    \end{pmatrix},
    \end{eqnarray*}
    \begin{eqnarray*}
\mc{T}_{1} =
    \begin{pmatrix}
    -1 & 1 & -2\\
    -2 & 1 & -2
    \end{pmatrix},~
\mc{T}_{2} =
    \begin{pmatrix}
     -2 & 1 & 1\\
     2 & -2 & 0
    \end{pmatrix},~~
    \mc{T}_{3} =
    \begin{pmatrix}
     2 & -1 & 2\\
     0 & 1 & 2
    \end{pmatrix}.
    \end{eqnarray*}
Then we can verify that  $\rank{(\mc{S}*\mc{T})} = \rank{(\mc{T})} = 5$. With the help of $(\mc{S}*\mc{T})^{(1)} \in \mathbb{R}^{3\times 2 \times 3}$ (Algorithm 2 \cite{RatiJMZ}) we can compute $\mc{X}=\mc{S}^{(2)}_{\rg(\mc{T}), *}$, where  
\begin{eqnarray*}
\mc{X}_{1} =
    \begin{pmatrix}
    0 & -1/3 \\
    1/2 & 1/6
    \end{pmatrix},~
\mc{X}_{2} =
    \begin{pmatrix}
     0 & 0\\
     -1/2 & -1/6
    \end{pmatrix},~~
 \mc{X}_{3} =
    \begin{pmatrix}
     1 & 1/3 \\
     0 & 0
    \end{pmatrix}.
    \end{eqnarray*}
\end{example}

\begin{theorem}\label{thmrc}
Assume that the statements in Lemma \ref{lemout1} are true, then  $\mc{S}\{2\}_{*,\nl(\mc{T})}$ will have the following representations:
   \begin{enumerate}
       \item[\rm (i)] $\mc{S}\{2\}_{*,\nl(\mc{T})}=\mc{Y}*\mc{T}=\mc{Z}*\mc{T}$, where $\mc{Y}\in (\mc{T}*\mc{S})\{1\}$ and $\mc{Z}$ satisfying $\mc{T}*\mc{S}*\mc{Z}*\mc{T}=\mc{T}$.
       \item[\rm (ii)] $\mc{S}\{2\}_{*,\nl(\mc{T})}=(\mc{T}*\mc{S})^{(1)}*\mc{T}+\mc{Z}*\mc{T}-(\mc{T}*\mc{S})^{(1)}*\mc{T}*\mc{S}*\mc{Z}*\mc{T}$, where $\mc{Z}$ is arbitrary.
   \end{enumerate}
   \end{theorem}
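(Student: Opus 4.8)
The plan is to run the proof of Theorem \ref{thmrb} ``from the other side'': wherever a range condition and the factorisation $\mc{X}=\mc{T}*\mc{Z}$ furnished by Proposition \ref{rgprop}(i) were used, I would instead use the corresponding null space condition and the factorisation $\mc{X}=\mc{Z}*\mc{T}$ furnished by Proposition \ref{rgprop}(ii); the left-cancellation Corollary \ref{lcan} takes over the role of Corollary \ref{rcan}, and in the $\bcr$ picture the two-sided system to be solved becomes $\bcr(\mc{T})\bcr(\mc{S})\bcr(\mc{Z})\bcr(\mc{T})=\bcr(\mc{T})$ in place of $\bcr(\mc{T})\bcr(\mc{Z})\bcr(\mc{S})\bcr(\mc{T})=\bcr(\mc{T})$.

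For part (i) I would establish a cycle of three inclusions, exactly mirroring Theorem \ref{thmrb}(i). Take $\mc{X}\in\mc{S}\{2\}_{*,\nl(\mc{T})}$, so that $\mc{X}*\mc{S}*\mc{X}=\mc{X}$ and $\nl(\mc{X})=\nl(\mc{T})$; since $\nl(\mc{T})\subseteq\nl(\mc{X})$, Proposition \ref{rgprop}(ii) gives $\mc{X}=\mc{Z}*\mc{T}$ for some $\mc{Z}\in\mathbb{C}^{q\times k\times n}$. Then $\mc{Z}*\mc{T}=\mc{X}*\mc{S}*\mc{X}=\mc{Z}*\mc{T}*\mc{S}*\mc{Z}*\mc{T}$, and passing to $\bcr$ (using $\bcr(\mc{M}^{\top})=(\bcr(\mc{M}))^{\top}$) the equality $\nl(\mc{X})=\nl(\mc{T})$ is exactly the hypothesis $\rg((\mc{Z}*\mc{T})^{\top})=\rg(\mc{T}^{\top})$ that Corollary \ref{lcan} needs in order to cancel $\mc{Z}$ on the left, which yields $\mc{T}*\mc{S}*\mc{Z}*\mc{T}=\mc{T}$. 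Right-multiplying this by $\mc{S}$ shows $\mc{Z}\in(\mc{T}*\mc{S})\{1\}$, hence $\mc{S}\{2\}_{*,\nl(\mc{T})}\subseteq\{\mc{Z}*\mc{T}\mid\mc{T}*\mc{S}*\mc{Z}*\mc{T}=\mc{T}\}\subseteq\{\mc{Y}*\mc{T}\mid\mc{Y}\in(\mc{T}*\mc{S})\{1\}\}$. Finally, for any $\mc{Y}\in(\mc{T}*\mc{S})\{1\}$, Lemma \ref{lemout1}(iv) (which, as for Lemma \ref{lemout}, is valid for every $\{1\}$-inverse of $\mc{T}*\mc{S}$) gives $\mc{T}*\mc{S}*\mc{Y}*\mc{T}=\mc{T}$; with $\mc{X}=\mc{Y}*\mc{T}$ this yields $\mc{X}*\mc{S}*\mc{X}=\mc{Y}*(\mc{T}*\mc{S}*\mc{Y}*\mc{T})=\mc{X}$, while $\nl(\mc{T})\subseteq\nl(\mc{X})$ is trivial and $\nl(\mc{X})\subseteq\nl(\mc{T}*\mc{S}*\mc{X})=\nl(\mc{T})$ since $\mc{T}*\mc{S}*\mc{X}=\mc{T}$, so $\mc{X}\in\mc{S}\{2\}_{*,\nl(\mc{T})}$. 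This closes the cycle and proves (i).

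For part (ii) I would pass to block circulant matrices and write $\mc{T}*\mc{S}*\mc{Z}*\mc{T}=\mc{T}$ as $\bcr(\mc{T})\bcr(\mc{S})\bcr(\mc{Z})\bcr(\mc{T})=\bcr(\mc{T})$. By Lemma \ref{lemout1}(v), $\text{rank}(\bcr(\mc{T})\bcr(\mc{S}))=\text{rank}(\bcr(\mc{T}))$, hence $\rg(\bcr(\mc{T})\bcr(\mc{S}))=\rg(\bcr(\mc{T}))$, and applying Corollary \ref{rcan} to $\bcr(\mc{T})\bcr(\mc{S})(\bcr(\mc{T})\bcr(\mc{S}))^{(1)}\bcr(\mc{T})\bcr(\mc{S})=\bcr(\mc{T})\bcr(\mc{S})$ gives $\bcr(\mc{T})\bcr(\mc{S})(\bcr(\mc{T})\bcr(\mc{S}))^{(1)}\bcr(\mc{T})=\bcr(\mc{T})$, so the system is consistent. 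Using the general description of the solution set of $AXB=C$, namely $X=A^{(1)}CB^{(1)}+W-A^{(1)}AWBB^{(1)}$ with $W$ arbitrary, for $A=\bcr(\mc{T})\bcr(\mc{S})$ and $B=C=\bcr(\mc{T})$, and then applying $\bcr^{-1}$ together with $(\bcr(\mc{T})\bcr(\mc{S}))^{(1)}=\bcr((\mc{T}*\mc{S})^{(1)})$ and $(\bcr(\mc{T}))^{(1)}=\bcr(\mc{T}^{(1)})$, I get
\[\mc{Z}=(\mc{T}*\mc{S})^{(1)}*\mc{T}*\mc{T}^{(1)}+\mc{Z}-(\mc{T}*\mc{S})^{(1)}*\mc{T}*\mc{S}*\mc{Z}*\mc{T}*\mc{T}^{(1)}.\]
Right-multiplying by $\mc{T}$ and using $\mc{T}*\mc{T}^{(1)}*\mc{T}=\mc{T}$ gives $\mc{Z}*\mc{T}=(\mc{T}*\mc{S})^{(1)}*\mc{T}+\mc{Z}*\mc{T}-(\mc{T}*\mc{S})^{(1)}*\mc{T}*\mc{S}*\mc{Z}*\mc{T}$, which together with part (i) is the claimed representation.

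I expect the only delicate point to be the bookkeeping in part (i): translating $\nl(\mc{X})=\nl(\mc{T})$ into precisely the transposed-range hypothesis of Corollary \ref{lcan}, and keeping the three format types $q\times k\times n$, $k\times p\times n$, $p\times q\times n$ consistent so that every $t$-product in the cycle of inclusions is well defined. Once the correct two-sided system $\bcr(\mc{T})\bcr(\mc{S})\bcr(\mc{Z})\bcr(\mc{T})=\bcr(\mc{T})$ is singled out, the computations in part (ii) are the same routine linear algebra as in Theorem \ref{thmrb}(ii).
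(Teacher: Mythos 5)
Your proof is correct and is exactly the argument the paper intends: it leaves Theorem \ref{thmrc} unproved, noting only that it follows ``similarly'' from Lemma \ref{lemout1} and the proof of Theorem \ref{thmrb}, and your dualization (Proposition \ref{rgprop}(ii) for the factorization $\mc{X}=\mc{Z}*\mc{T}$, Corollary \ref{lcan} in place of Corollary \ref{rcan} for part (i), Corollary \ref{rcan} for consistency of $\bcr(\mc{T})\bcr(\mc{S})\bcr(\mc{Z})\bcr(\mc{T})=\bcr(\mc{T})$ in part (ii), then the general solution of $A X B=C$) is precisely that mirrored argument. No gaps beyond the same mild abuse of notation for the arbitrary parameter $\mc{Z}$ that the paper itself commits in Theorem \ref{thmrb}(ii).
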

\begin{lemma}\label{lembc}
    Let $\mc{T} \in \mathbb{C}^{p\times q\times n}$, $\mc{B} \in \mathbb{C}^{q\times k\times n}$,  and $\mc{C} \in \mathbb{C}^{s\times p\times n}$. Then the following statements are equivalent:
    \begin{enumerate}
        \item[\rm (i)] there exists a tensor $\mc{X}$ such that $\mc{T}^{(2)}_{\rg(\mc{B}),\nl(\mc{C})}=\mc{X}$. 
        \item[\rm (ii)] there exist tensors $\mc{U},\mc{V} \in \mathbb{C}^{k\times s\times n}$ such that
        \[ \mc{B}*\mc{U}*\mc{C}*\mc{T}*\mc{B}=\mc{B},~ \mbox{and } \mc{C}*\mc{T}*\mc{B}*\mc{V}*\mc{C}=\mc{C}.\]
        \item[\rm (iii)] there exist tensors $\mc{Y}\in \mathbb{C}^{k\times p\times n}$ and $\mc{Z}\in \mathbb{C}^{q\times s\times n}$ such that 
         \[ \mc{B}*\mc{Y}*\mc{T}*\mc{B}=\mc{B},~  \mc{C}*\mc{T}*\mc{Z}*\mc{C}=\mc{C},~\mbox{and }\mc{B}*\mc{Y}=\mc{Z}*\mc{C}.\]
        \item[\rm (iv)] there exist tensors $\mc{U}\in \mathbb{C}^{k\times p\times n}$ and $\mc{V}\in \mathbb{C}^{q\times s\times n}$ such that 
         \[ \mc{C}*\mc{T}*\mc{B}*\mc{U}=\mc{C},~\mbox{and }  \mc{V}*\mc{C}*\mc{T}*\mc{B}=\mc{B}.\]
        \item[\rm (v)] $\rg(\mc{C}*\mc{T}*\mc{B})=\rg(\mc{C})$ and $\nl(\mc{C}*\mc{T}*\mc{B})=\nl(\mc{B})$.
        \item[\rm (vi)] $\mc{B}*(\mc{C}*\mc{T}*\mc{B})^{(1)}*\mc{C}*\mc{T}*\mc{B}=\mc{B}$ and $\mc{C}*\mc{T}*\mc{B}*(\mc{C}*\mc{T}*\mc{B})^{(1)}*\mc{C}=\mc{C}$.
        \item[\rm (vii)] $\rank(\mc{C}*\mc{T}*\mc{B})=\rank(\mc{B})=\rank(\mc{C})$.
            \end{enumerate}
\end{lemma}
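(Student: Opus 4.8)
The plan is to prove the seven statements equivalent through a small network of implications that all route through statement~(v), which serves as a convenient hub. Explicitly I would establish $(i)\Rightarrow(ii)$, $(i)\Rightarrow(iii)$, $(ii)\Rightarrow(v)$, $(iii)\Rightarrow(iv)$, $(iv)\Rightarrow(v)$, $(v)\Rightarrow(vi)$, $(vi)\Rightarrow(iv)$ (immediate, since (vi) is (iv) for the choices $\mc{U}=(\mc{C}*\mc{T}*\mc{B})^{(1)}*\mc{C}$ and $\mc{V}=\mc{B}*(\mc{C}*\mc{T}*\mc{B})^{(1)}$), $(v)\Leftrightarrow(vii)$, and finally $(v)\Rightarrow(i)$; chasing these arrows delivers all equivalences. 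The only tools beyond the statement itself are Proposition~\ref{rgprop} (the ``factorisation'' criteria for inclusion of $t$-ranges and $t$-null spaces) and, for $(v)\Leftrightarrow(vii)$ alone, the facts that $\bcr$ converts $*$ into matrix multiplication and that $\rg,\nl,\rank$ of a tensor are by definition the range, null space and rank of its block circulant image, together with the matrix observation that $R(MN)\subseteq R(M)$ and $N(M)\subseteq N(NM)$ always hold, so that equality of such spaces amounts to equality of ranks. Because $\bcr$ preserves all the relevant structure, a much shorter alternative is to apply $\bcr$ everywhere and invoke the classical matrix characterisation of $A^{(2)}_{R(B),N(C)}$; I describe the intrinsic argument since it runs parallel to Lemmas~\ref{lemout} and~\ref{lemout1}.

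The engine is the following consequence of (i). Put $\mc{X}=\mc{T}^{(2)}_{\rg(\mc{B}),\nl(\mc{C})}$. From $\rg(\mc{X})=\rg(\mc{B})$ and Proposition~\ref{rgprop}(i) we get $\mc{X}=\mc{B}*\mc{U}_1$ and $\mc{B}=\mc{X}*\mc{W}_1$; from $\nl(\mc{X})=\nl(\mc{C})$ and Proposition~\ref{rgprop}(ii) we get $\mc{X}=\mc{Z}_1*\mc{C}$ and $\mc{C}=\mc{Y}_1*\mc{X}$, all tensors of the appropriate sizes. Feeding the first pair into $\mc{X}=\mc{X}*\mc{T}*\mc{X}$ gives $\mc{X}=\mc{B}*\mc{U}*\mc{C}$ with $\mc{U}:=\mc{U}_1*\mc{T}*\mc{Z}_1\in\mathbb{C}^{k\times s\times n}$, while the second pair gives $\mc{B}=\mc{X}*\mc{T}*\mc{B}$ and $\mc{C}=\mc{C}*\mc{T}*\mc{X}$. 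Substituting $\mc{X}=\mc{B}*\mc{U}*\mc{C}$ into those last two identities yields $\mc{B}*\mc{U}*\mc{C}*\mc{T}*\mc{B}=\mc{B}$ and $\mc{C}*\mc{T}*\mc{B}*\mc{U}*\mc{C}=\mc{C}$, which is (ii) with $\mc{V}=\mc{U}$; and setting $\mc{Y}:=\mc{U}*\mc{C}$, $\mc{Z}:=\mc{B}*\mc{U}$ yields (iii), the linking relation being just $\mc{B}*\mc{Y}=\mc{B}*\mc{U}*\mc{C}=\mc{Z}*\mc{C}$.

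The remaining arrows are short diagram chases. For $(iii)\Rightarrow(iv)$, use $\mc{B}*\mc{Y}=\mc{Z}*\mc{C}$ to rewrite $\mc{B}*\mc{Y}*\mc{T}*\mc{B}=\mc{B}$ as $\mc{Z}*\mc{C}*\mc{T}*\mc{B}=\mc{B}$ and $\mc{C}*\mc{T}*\mc{Z}*\mc{C}=\mc{C}$ as $\mc{C}*\mc{T}*\mc{B}*\mc{Y}=\mc{C}$. For $(ii)\Rightarrow(v)$ and $(iv)\Rightarrow(v)$, read each defining identity as a factorisation --- for instance $\mc{C}=(\mc{C}*\mc{T}*\mc{B})*(\mc{V}*\mc{C})$ and $\mc{B}=(\mc{B}*\mc{U})*(\mc{C}*\mc{T}*\mc{B})$ --- and combine Proposition~\ref{rgprop} with the always-valid inclusions $\rg(\mc{C}*\mc{T}*\mc{B})\subseteq\rg(\mc{C})$, $\nl(\mc{B})\subseteq\nl(\mc{C}*\mc{T}*\mc{B})$ to upgrade the resulting inclusions to the equalities of (v). For $(v)\Rightarrow(vi)$, write $\mc{C}=(\mc{C}*\mc{T}*\mc{B})*\mc{W}$ (from $\rg(\mc{C})=\rg(\mc{C}*\mc{T}*\mc{B})$) and $\mc{B}=\mc{W}'*(\mc{C}*\mc{T}*\mc{B})$ (from $\nl(\mc{B})=\nl(\mc{C}*\mc{T}*\mc{B})$), then absorb the $\{1\}$-inverse using $\mc{C}*\mc{T}*\mc{B}*(\mc{C}*\mc{T}*\mc{B})^{(1)}*(\mc{C}*\mc{T}*\mc{B})=\mc{C}*\mc{T}*\mc{B}$. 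Finally $(v)\Leftrightarrow(vii)$ follows by applying $\bcr$, so that $\rg,\nl,\rank$ become the range, null space and rank of $\bcr(\mc{C})\bcr(\mc{T})\bcr(\mc{B})$ and of $\bcr(\mc{B})$, $\bcr(\mc{C})$, and then using the matrix facts quoted above.

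For the last and most substantial step, $(v)\Rightarrow(i)$, set $\mc{X}:=\mc{B}*(\mc{C}*\mc{T}*\mc{B})^{(1)}*\mc{C}$ and verify the three defining properties of $\mc{T}^{(2)}_{\rg(\mc{B}),\nl(\mc{C})}$ using the absorption identities of (vi), available by $(v)\Rightarrow(vi)$: grouping gives $\mc{X}*\mc{T}*\mc{X}=\mc{B}*(\mc{C}*\mc{T}*\mc{B})^{(1)}*(\mc{C}*\mc{T}*\mc{B})*(\mc{C}*\mc{T}*\mc{B})^{(1)}*\mc{C}=\mc{B}*(\mc{C}*\mc{T}*\mc{B})^{(1)}*\mc{C}=\mc{X}$; the factorisation $\mc{X}=\mc{B}*(\mc{C}*\mc{T}*\mc{B})^{(1)}*\mc{C}$ exhibits $\mc{X}$ as a left $*$-multiple of $\mc{B}$, while $\mc{B}=\mc{B}*(\mc{C}*\mc{T}*\mc{B})^{(1)}*\mc{C}*\mc{T}*\mc{B}=\mc{X}*\mc{T}*\mc{B}$ gives the reverse inclusion, so $\rg(\mc{X})=\rg(\mc{B})$, and the kernel statement $\nl(\mc{X})=\nl(\mc{C})$ is entirely symmetric via Proposition~\ref{rgprop}(ii). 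I expect no deep obstacle: the proof is essentially a transcription of the matrix case, and the only points needing real care are organisational --- this choice of implications sidesteps the one genuinely fiddly step, namely the symmetrisation $\mc{U}\mapsto\mc{U}*\mc{C}*\mc{T}*\mc{B}*\mc{V}$ that a direct proof of $(ii)\Rightarrow(iii)$ would require --- together with keeping tensor sizes and the grouping of $*$-products straight so that the correct absorption identity applies at each stage. In particular the cancellation Corollaries~\ref{lcan}--\ref{rcan} are not needed here.
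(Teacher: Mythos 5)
Your proof is correct, and it is genuinely more self-contained than what the paper does: the paper disposes of this lemma in one line, ``It follows from Lemma \ref{lemout} and Lemma \ref{lemout1}'', i.e.\ it treats the two-sided statement as the conjunction of the one-sided characterizations of $\mc{T}^{(2)}_{\rg(\mc{B}),*}$ and $\mc{T}^{(2)}_{*,\nl(\mc{C})}$ already proved, whereas you rebuild the whole equivalence directly from Proposition \ref{rgprop} with (v) as the hub. The two routes use the same underlying machinery (the factorisation criteria for $\rg$ and $\nl$, the trivial inclusions $\rg(\mc{C}*\mc{T}*\mc{B})\subseteq\rg(\mc{C})$, $\nl(\mc{B})\subseteq\nl(\mc{C}*\mc{T}*\mc{B})$, and absorption of a $\{1\}$-inverse of $\mc{C}*\mc{T}*\mc{B}$), so nothing in your argument is foreign to the paper; but your version buys two things the terse citation does not. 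First, your step $(v)\Rightarrow(i)$ exhibits the explicit candidate $\mc{X}=\mc{B}*(\mc{C}*\mc{T}*\mc{B})^{(1)}*\mc{C}$ and verifies all three defining properties, which simultaneously proves the representation theorem the paper states immediately after the lemma. Second, conditions (ii), (iii), (iv) with the compatibility relation $\mc{B}*\mc{Y}=\mc{Z}*\mc{C}$ are not literal specializations of the one-sided lemmas, so the paper's one-line deduction quietly leaves exactly the construction you carry out in $(i)\Rightarrow(ii),(iii)$ (the factorization $\mc{X}=\mc{B}*\mc{U}*\mc{C}$ with $\mc{U}=\mc{U}_1*\mc{T}*\mc{Z}_1$) to the reader; your write-up fills that gap. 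The only cost of your route is length, and, as you note, you correctly avoid the fiddly direct symmetrisation $(ii)\Rightarrow(iii)$ by routing through (i).
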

\begin{proof}
It follows from Lemma \ref{lemout} and Lemma \ref{lemout1}.
\end{proof}
The following theorem is an immediate consequence of Lemma \ref{lembc}.

\begin{algorithm}[hbt!]
  \caption{Computation of outer inverse $\mc{X} = \mc{S}^{(2)}_{*, \nl(\mc{T})}$} \label{Algonull}
  \begin{algorithmic}[1]
    \Procedure{oinverse}{$\mc{S}$}
    \State \textbf{Input}  $\mc{S}\in \cpqn$ and $\mc{T}\in \ckpn$.
      \For{$i \gets 1$ to $n$} 
      \State $\mc{S}=\textup{fft}(\mc{S}, [~ ], i);$ ~ $\mc{T}=\textup{fft}(\mc{S}, [~ ], i);$
      \EndFor
      \For{$i \gets 1$ to $n$} 
       \State $\mc{Z}(:, :, i ) = \mc{T}(:, :, i )\mc{S}(:, :, i );$
      \EndFor
      \For{$i \gets n$ to $1$} 
      \State $\mc{Z} \leftarrow \textup{ifft}(\mc{Z}, [~ ], i);$
      \EndFor
     \If{$\rank{(\mc{Z})} = \rank{(\mc{T})}$}
      \State \textbf{compute} $\mc{Z}^{(1)}$ using Algorithm 2 \cite{RatiJMZ} 
    \EndIf
    \State \textbf{compute} $\mc{Z}^{(1)}*\mc{T}$  
      \State \textbf{return } $\mc{S}^{(2)}_{*, \nl(\mc{T})} = \mc{Z}^{(1)}*\mc{T}$ 
    \EndProcedure
  \end{algorithmic}
\end{algorithm}

\begin{example} 
Consider the tensor $ \mathcal{S}$ defined in Example \ref{Ex1} and $ \mathcal{T} \in \mathbb{R}^{3 \times 2 \times 3}$  with frontal slices 
 \begin{eqnarray*}
\mc{T}_{1} =
    \begin{pmatrix}
    0 & 1 \\
    1 & -1 \\
    0 & 1
    \end{pmatrix},~
\mc{T}_{2} =
    \begin{pmatrix}
     1 & 0 \\
     0 & 0 \\
     1 & 0
    \end{pmatrix},~~
    \mc{T}_{3} =
    \begin{pmatrix}
     0 & 0 \\
     -1 & 1 \\
     1 & 1
    \end{pmatrix}.
    \end{eqnarray*}
Then we can verify that  $\rank{(\mc{T}*\mc{S})} = \rank{(\mc{T})} = 5$. With the help of $(\mc{T}*\mc{S})^{(1)} \in \mathbb{R}^{2\times 3 \times 3}$ (Algorithm 2 \cite{RatiJMZ}) we can compute $\mc{X}=\mc{S}^{(2)}_{*,\nl(\mc{T})} = (\mc{T}*\mc{S})^{(1)}*\mc{T}$, where  
\begin{eqnarray*}
\mc{X}_{1} =
    \begin{pmatrix}
    -1/6 & -1/6 \\
    2/3 & 1/3
    \end{pmatrix},~
\mc{X}_{2} =
    \begin{pmatrix}
    -1/6           & 1/6  \\   
      -1/3          & 0
    \end{pmatrix},~~
 \mc{X}_{3} =
    \begin{pmatrix}
     5/6     &       1/2   \\  
       1/6    &        1/6 
    \end{pmatrix}.
    \end{eqnarray*}
\end{example}

\begin{theorem}
   If the statements in Lemma \ref{lembc} are satisfied, then 
   \[\mc{T}^{(2)}_{\rg(\mc{B}),\nl(\mc{C})}=\mc{B}*(\mc{C}*\mc{T}*\mc{B})^{(1)}*\mc{C}=\mc{B}*\mc{Z}*\mc{C},\]
   where $\mc{Z}$ satisfies $\mc{B}*\mc{Z}*\mc{C}*\mc{T}*\mc{B}=\mc{B}$ and $\mc{C}*\mc{T}*\mc{B}*\mc{Z}*\mc{C}=\mc{C}$. 
\end{theorem}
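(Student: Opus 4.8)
The plan is to prove that the single tensor $\mc{X}:=\mc{B}*(\mc{C}*\mc{T}*\mc{B})^{(1)}*\mc{C}$ lies in $\mc{T}\{2\}_{\rg(\mc{B}),\nl(\mc{C})}$, to observe that this set has at most one member, and then to read off both equalities at once. Throughout I would abbreviate $\mc{G}=(\mc{C}*\mc{T}*\mc{B})^{(1)}$ and use the two identities handed to us by Lemma \ref{lembc} (vi), namely $\mc{B}*\mc{G}*\mc{C}*\mc{T}*\mc{B}=\mc{B}$ and $\mc{C}*\mc{T}*\mc{B}*\mc{G}*\mc{C}=\mc{C}$, which hold because the (equivalent) hypotheses of Lemma \ref{lembc} are assumed.

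First I would check the three defining properties of $\mc{X}$. The outer equation is immediate: $\mc{X}*\mc{T}*\mc{X}=\mc{B}*\mc{G}*(\mc{C}*\mc{T}*\mc{B})*\mc{G}*\mc{C}=(\mc{B}*\mc{G}*\mc{C}*\mc{T}*\mc{B})*\mc{G}*\mc{C}=\mc{B}*\mc{G}*\mc{C}=\mc{X}$. For the $t$-range, $\mc{X}=\mc{B}*(\mc{G}*\mc{C})$ gives $\rg(\mc{X})\subseteq\rg(\mc{B})$ by Proposition \ref{rgprop} (i), while $\mc{B}=\mc{X}*(\mc{T}*\mc{B})$ (a rewriting of $\mc{B}*\mc{G}*\mc{C}*\mc{T}*\mc{B}=\mc{B}$) gives $\rg(\mc{B})\subseteq\rg(\mc{X})$, hence $\rg(\mc{X})=\rg(\mc{B})$. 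Dually, $\mc{X}=(\mc{B}*\mc{G})*\mc{C}$ together with $\mc{C}=(\mc{C}*\mc{T})*\mc{X}$ and Proposition \ref{rgprop} (ii) give $\nl(\mc{X})=\nl(\mc{C})$. Thus $\mc{X}\in\mc{T}\{2\}_{\rg(\mc{B}),\nl(\mc{C})}$.

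Second, I would verify uniqueness directly from Proposition \ref{rgprop}. Suppose $\mc{W}$ also satisfies $\mc{W}*\mc{T}*\mc{W}=\mc{W}$, $\rg(\mc{W})=\rg(\mc{X})$ and $\nl(\mc{W})=\nl(\mc{X})$. Proposition \ref{rgprop} (i) yields a tensor with $\mc{W}=\mc{X}*\mc{P}$ and Proposition \ref{rgprop} (ii) a tensor with $\mc{X}=\mc{Y}*\mc{W}$; then $\mc{X}*\mc{T}*\mc{W}=(\mc{X}*\mc{T}*\mc{X})*\mc{P}=\mc{X}*\mc{P}=\mc{W}$ and also $\mc{X}*\mc{T}*\mc{W}=\mc{Y}*(\mc{W}*\mc{T}*\mc{W})=\mc{Y}*\mc{W}=\mc{X}$, so $\mc{W}=\mc{X}$. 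Consequently $\mc{T}^{(2)}_{\rg(\mc{B}),\nl(\mc{C})}=\mc{X}=\mc{B}*(\mc{C}*\mc{T}*\mc{B})^{(1)}*\mc{C}$.

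For the last equality I would take any $\mc{Z}$ with $\mc{B}*\mc{Z}*\mc{C}*\mc{T}*\mc{B}=\mc{B}$ and $\mc{C}*\mc{T}*\mc{B}*\mc{Z}*\mc{C}=\mc{C}$; such $\mc{Z}$ exist — for instance $\mc{Z}=\mc{G}$ by Lemma \ref{lembc} (vi) — and left-multiplying the first relation by $\mc{C}*\mc{T}$ shows in addition that $\mc{Z}\in(\mc{C}*\mc{T}*\mc{B})\{1\}$. The computations of the second paragraph apply verbatim with $\mc{Z}$ in place of $\mc{G}$ and show $\mc{B}*\mc{Z}*\mc{C}\in\mc{T}\{2\}_{\rg(\mc{B}),\nl(\mc{C})}$, so by the uniqueness just established $\mc{B}*\mc{Z}*\mc{C}=\mc{X}$. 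I do not anticipate a real obstacle: the statement is essentially a packaging of Lemma \ref{lembc}, and the only point demanding care is spelling out uniqueness so that the several representations are genuinely the same tensor and not merely a common set. If one prefers, the whole argument can instead be transported through $\bcr$ and reduced to the classical matrix identity $A^{(2)}_{R(B),N(C)}=B(CAB)^{(1)}C$.
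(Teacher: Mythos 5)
Your proposal is correct, and it follows exactly the route the paper intends: the paper gives no written proof beyond declaring the theorem ``an immediate consequence of Lemma \ref{lembc}'', and your argument is precisely the spelling-out of that claim, using the identities of Lemma \ref{lembc} (vi) together with Proposition \ref{rgprop} to verify that $\mc{B}*(\mc{C}*\mc{T}*\mc{B})^{(1)}*\mc{C}$ (and likewise $\mc{B}*\mc{Z}*\mc{C}$) is an outer inverse with $t$-range $\rg(\mc{B})$ and $t$-null space $\nl(\mc{C})$, plus the standard uniqueness argument that the paper leaves implicit. No gaps; the uniqueness step in particular is a worthwhile addition since it is what makes the several representations equal as a single tensor.
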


\begin{algorithm}[hbt!]
  \caption{Computation of outer inverse $\mc{X} = \mc{S}^{(2)}_{\rg({B}), \nl({C})}$} \label{Algoouter}
  \begin{algorithmic}[1]
    \Procedure{oinverse}{$\mc{S}$}
    \State \textbf{Input} $\mc{T} \in \mathbb{C}^{p\times q\times n}$, $\mc{B} \in \mathbb{C}^{q\times k\times n}$,  and $\mc{C} \in \mathbb{C}^{s\times p\times n}$. 
      \For{$i \gets 1$ to $n$} 
      \State $\mc{T}=\textup{fft}(\mc{T}, [~ ], i);$ ~ $\mc{B}=\textup{fft}(\mc{B}, [~ ], i);$~ $\mc{C}=\textup{fft}(\mc{C}, [~ ], i);$
      \EndFor
      \For{$i \gets 1$ to $n$} 
       \State $\mc{Z}(:, :, i ) = \mc{C}(:, :, i )\mc{T}(:, :, i )\mc{B}(:, :, i );$
      \EndFor
      \For{$i \gets n$ to $1$} 
      \State $\mc{Z} \leftarrow \textup{ifft}(\mc{Z}, [~ ], i);$
      \EndFor
     \If{$\rank{(\mc{Z})} = \rank{(\mc{B})} =  \rank{(\mc{C})}$}
      \State \textbf{compute} $\mc{Z}^{(1)}$ using Algorithm 2 \cite{RatiJMZ} 
    \EndIf
    \State \textbf{compute} $\mc{B}*\mc{Z}^{(1)}*\mc{C}$  
      \State \textbf{return } $\mc{S}^{(2)}_{\rg({B}), \nl({C})} = \mc{B}*\mc{Z}^{(1)}*\mc{C}$ 
    \EndProcedure
  \end{algorithmic}
\end{algorithm}

\begin{example} 
Consider the tensor $ \mathcal{T} = \mathcal{S}$, where $\mathcal{S}$ defined in Example \ref{Ex1}. Let $ \mathcal{B} \in \mathbb{R}^{2 \times 3 \times 3}$ and $ \mathcal{C} \in \mathbb{R}^{3 \times 2 \times 3}$  with frontal slices 
 \begin{eqnarray*}
\mc{B}_{1} =
    \begin{pmatrix}
    1 & 2 & 1\\
    0 & 0 & 1
    \end{pmatrix},~
\mc{B}_{2} =
    \begin{pmatrix}
     1 & 2 & 1\\
    0 & 0 & 1
    \end{pmatrix},~~
    \mc{B}_{3} =
    \begin{pmatrix}
     1 & 2 & 1\\
    0 & 0 & 1
    \end{pmatrix},
    \end{eqnarray*}
    \begin{eqnarray*}
\mc{C}_{1} =
    \begin{pmatrix}
    1 & 2 \\
    0 & 0 \\
    1 & 1
    \end{pmatrix},~
\mc{C}_{2} =
    \begin{pmatrix}
     1 & 2 \\
     1 & 0 \\
     1 & 1
    \end{pmatrix},~~
    \mc{C}_{3} =
    \begin{pmatrix}
     1 & 2 \\
     1 & 0 \\
     1 & 1
    \end{pmatrix}.
    \end{eqnarray*}
Then we can verify that  $\rank{(\mc{C}*\mc{T}*\mc{B})} = \rank{(\mc{B})} =\rank{(\mc{C})} = 24$, Further, using Algorithm 2 \cite{RatiJMZ} we can compute $(\mc{C}*\mc{T}*\mc{B})^{(1)}$
\begin{eqnarray*}
(\mc{C}*\mc{T}*\mc{B})^{(1)}_{1} =
    \begin{pmatrix}
     1 & 2 & 1\\
    0 & 0 & 1\\
    0 & 0 & 1
    \end{pmatrix},~
(\mc{C}*\mc{T}*\mc{B})^{(1)}_{2} =
    \begin{pmatrix}
     1 & 2 & 1\\
    0 & 0 & 1\\
    0 & 0 & 1
    \end{pmatrix},~~
 (\mc{C}*\mc{T}*\mc{B})^{(1)}_{3} =
    \begin{pmatrix}
     1 & 2 & 1\\
    0 & 0 & 1\\
    0 & 0 & 1
    \end{pmatrix}.
    \end{eqnarray*}
Following $(\mc{C}*\mc{T}*\mc{B})^{(1)}$ we can compute $\mc{X}=\mc{T}^{(2)}_{\rg(\mc{B}),\nl(\mc{C})} = \mc{B}*(\mc{C}*\mc{T}*\mc{B})^{(1)}*\mc{C}$, where  
\begin{eqnarray*}
\mc{X}_{1} =
    \begin{pmatrix}
    1 & 2 \\
    0 & 0
    \end{pmatrix},~
\mc{X}_{2} =
    \begin{pmatrix}
     1 & 2\\
     1 & 0
    \end{pmatrix},~~
 \mc{X}_{3} =
    \begin{pmatrix}
     1 & 2 \\
     1 & 0 
    \end{pmatrix}.
    \end{eqnarray*}
\end{example}

The other generalized inverses can be derived as a special case from the outer generalized inverse with a prescribed range and null space, as given in the result below.

\begin{theorem}
    Let $\mc{S}\in \cpqn$ and $\mc{T}\in \mathbb{C}^{p\times p\times n}$ with $ind(\mc{T})=k$. Then 
    \begin{enumerate}
        \item[\rm (i)] $\mc{S}^{\dagger}=\mc{S}^{(2)}_{\rg(\mc{S}^*),\nl(\mc{S}^*)}$.
        \item[\rm (ii)] $\mc{T}^{D}=\mc{T}^{(2)}_{\rg(\mc{T}^k),\nl(\mc{T}^k)}$.
    \end{enumerate}
\end{theorem}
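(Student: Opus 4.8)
\emph{Proof strategy.} The plan is to exhibit, in each part, the claimed generalized inverse as a $\{2\}$-inverse of the relevant tensor whose $t$-range and $t$-null space coincide with the prescribed ones, and then to invoke uniqueness of the outer inverse with prescribed range and kernel. The single workhorse is Proposition~\ref{rgprop}: a right factorization $\mc{A}=\mc{B}*\mc{Z}$ is equivalent to $\rg(\mc{A})\subseteq\rg(\mc{B})$, and a left factorization $\mc{A}=\mc{Y}*\mc{B}$ is equivalent to $\nl(\mc{B})\subseteq\nl(\mc{A})$. I will also use the identity $(\mc{A}*\mc{B})^{*}=\mc{B}^{*}*\mc{A}^{*}$, which follows at once from the $\bcr$-properties recorded in the preliminaries (linearity, multiplicativity, and compatibility with conjugate transpose) by passing through $\bcr$.

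\emph{Part (i).} Write $\mc{X}=\mc{S}^{\dagger}$, so that $\mc{X}$ satisfies the Penrose equations $(1)$, $(2)$, $(3)$, $(4)$; equation $(2)$ already makes $\mc{X}$ an outer inverse of $\mc{S}$, so only the $t$-range and $t$-null space must be identified. Using $(2)$ with the Hermitian-symmetry equation $(4)$, I would write $\mc{X}=\mc{X}*\mc{S}*\mc{X}=(\mc{X}*\mc{S})*\mc{X}=\mc{S}^{*}*\mc{X}^{*}*\mc{X}$, which gives $\rg(\mc{X})\subseteq\rg(\mc{S}^{*})$; applying $*$ to $(1)$ and using $(4)$ again yields $\mc{S}^{*}=\mc{X}*(\mc{S}*\mc{S}^{*})$, hence $\rg(\mc{S}^{*})\subseteq\rg(\mc{X})$. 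Dually, $(2)$ with $(3)$ gives $\mc{X}=\mc{X}*(\mc{S}*\mc{X})=(\mc{X}*\mc{X}^{*})*\mc{S}^{*}$, so $\nl(\mc{S}^{*})\subseteq\nl(\mc{X})$, while applying $*$ to $(1)$ and using $(3)$ gives $\mc{S}^{*}=(\mc{S}^{*}*\mc{S})*\mc{X}$, so $\nl(\mc{X})\subseteq\nl(\mc{S}^{*})$. Thus $\mc{X}$ is an outer inverse of $\mc{S}$ with $\rg(\mc{X})=\rg(\mc{S}^{*})$ and $\nl(\mc{X})=\nl(\mc{S}^{*})$, which is the assertion once uniqueness is in hand.

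\emph{Part (ii).} Write $\mc{X}=\mc{T}^{D}$, so that $\mc{X}$ satisfies $(1^{k})$, $(2)$, and the commutativity equation $(5)$; again $(2)$ makes $\mc{X}$ an outer inverse of $\mc{T}$. From $(5)$ one obtains $\mc{T}^{j}*\mc{X}=\mc{X}*\mc{T}^{j}$ for every $j$, and combining $(2)$ with $(5)$ and iterating I would obtain $\mc{X}=\mc{X}^{k+1}*\mc{T}^{k}=\mc{T}^{k}*\mc{X}^{k+1}$. The first form writes $\mc{X}=\mc{Y}*\mc{T}^{k}$, giving $\nl(\mc{T}^{k})\subseteq\nl(\mc{X})$, and the second writes $\mc{X}=\mc{T}^{k}*\mc{Z}$, giving $\rg(\mc{X})\subseteq\rg(\mc{T}^{k})$. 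Conversely $(1^{k})$ reads $\mc{T}^{k}=\mc{T}^{k+1}*\mc{X}$, so $\nl(\mc{X})\subseteq\nl(\mc{T}^{k})$, and using $(5)$ once more, $\mc{T}^{k}=\mc{T}^{k+1}*\mc{X}=\mc{X}*\mc{T}^{k+1}$, so $\rg(\mc{T}^{k})\subseteq\rg(\mc{X})$. Hence $\mc{X}$ is an outer inverse of $\mc{T}$ with $\rg(\mc{X})=\rg(\mc{T}^{k})$ and $\nl(\mc{X})=\nl(\mc{T}^{k})$, as required.

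\emph{Uniqueness and the main obstacle.} To conclude in both parts I would record the fact that makes $\mc{S}^{(2)}_{\rg(\mc{B}),\nl(\mc{C})}$ well defined: if $\mc{X}_{1},\mc{X}_{2}$ are outer inverses of $\mc{S}$ with $\rg(\mc{X}_{1})=\rg(\mc{X}_{2})$ and $\nl(\mc{X}_{1})=\nl(\mc{X}_{2})$, then Proposition~\ref{rgprop} lets us write $\mc{X}_{2}=\mc{X}_{1}*\mc{P}$ and $\mc{X}_{1}=\mc{Q}*\mc{X}_{2}$, whence $\mc{X}_{1}=\mc{Q}*\mc{X}_{2}*\mc{S}*\mc{X}_{2}=\mc{X}_{1}*\mc{S}*\mc{X}_{2}=\mc{X}_{1}*\mc{S}*\mc{X}_{1}*\mc{P}=\mc{X}_{1}*\mc{P}=\mc{X}_{2}$; alternatively this is inherited from the matrix statements $A^{\dagger}=A^{(2)}_{R(A^{*}),N(A^{*})}$ and $A^{D}=A^{(2)}_{R(A^{k}),N(A^{k})}$ applied to $\bcr(\mc{S})$ and $\bcr(\mc{T})$, since $\bcr$ is a bijection preserving products, conjugate transposes, and the subspaces $\rg,\nl$. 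I do not expect a genuine difficulty here; the only thing needing care is the bookkeeping of \emph{which} of the equations $(1)$--$(5)$ and which side of $(\mc{A}*\mc{B})^{*}=\mc{B}^{*}*\mc{A}^{*}$ to invoke at each step, so that the factor $\mc{S}^{*}$ (respectively $\mc{T}^{k}$) lands on the side demanded by Proposition~\ref{rgprop}.
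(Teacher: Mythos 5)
Your proof is correct and follows essentially the same route as the paper: both arguments show that $\mc{S}^{\dagger}$ (resp.\ $\mc{T}^{D}$) is an outer inverse whose $t$-range and $t$-null space coincide with those of $\mc{S}^{*}$ (resp.\ $\mc{T}^{k}$), using the same identities derived from the defining equations together with the factorization criterion of Proposition~\ref{rgprop}. The only difference is cosmetic: you stay entirely at the tensor level and add an explicit uniqueness check for $\mc{S}^{(2)}_{\rg(\mc{B}),\nl(\mc{C})}$, whereas the paper handles the null-space inclusions by a vector argument through $\bcr$ and leaves uniqueness implicit.
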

\begin{proof}
 (i) It is sufficient to  show $\rg(\mc{S}^{\dagger})=\rg(\mc{S}^{*})$ and $\nl(\mc{S}^{\dagger})=\nl(\mc{S}^{\dagger})$. From the below identities:
 \[\bcr(\mc{S}^*)=\bcr(\mc{S}^\dagger)\bcr(\mc{S})\bcr(\mc{S}^*)\mbox{ and  }\bcr(\mc{S}^\dagger)=\bcr(\mc{S}^*)\bcr((\mc{A}^\dagger)^*)\bcr(\mc{A}^{\dagger}),\]
 we obtain $R(\bcr(\mc{S}^*)=R(\bcr(\mc{S}^\dagger))$. Thus $\rg(\mc{S}^{\dagger})=\rg(\mc{S}^{*})$. If $Z\in \nl(\mc{S}^\dagger)$ then $\bcr(\mc{S}^\dagger) Z=O$. Now $\bcr(\mc{S}^*)Z=\bcr(\mc{S}^*)\bcr(\mc{S})\bcr(\mc{S}^\dagger) Z=O$ and hence $Z\in N(\bcr(\mc{S}^*))=\nl(\mc{S^*})$. Conversely, if $Z\in \nl(\mc{S}^*)=N(\bcr(\mc{S}^*))$ then $\bcr(\mc{S}^*)Z=O$. Moreover,
 \[\bcr(\mc{S}^\dagger)Z=\bcr(\mc{S}^\dagger)(\bcr(\mc{S}^\dagger)^*\bcr(\mc{S}^*)Z=O.\]
 Hence completes the proof of (i).\\
 (ii) From $\mc{T}^D=\mc{T}^k*(\mc{T}^D)^{k+1}$,  $\mc{T}^k=\mc{T}^D*\mc{T}^k$, and applying Lemma \ref{rgprop} (i), we get $\rg(\mc{T}^D)=\rg(\mc{T}^k)$.  If $Z\in \nl(\mc{T}^D)=N(\bcr(\mc{T}^D))$ then $\bcr(\mc{T}^D) Z=O$. Now $\bcr(\mc{T}^k)Z=\bcr(\mc{T}^{k+1})\bcr(\mc{T}^D)=O$ and hence $Z\in N(\bcr(\mc{T}^k))=\nl(\mc{T}^k)$. Conversely, if $Z\in \nl(\mc{T}^k)=N(\bcr(\mc{T}^k))$ then $\bcr(\mc{T}^k)Z=O$. In addition, $\bcr(\mc{T}^D)Z=(\bcr(\mc{T}^D)^{k+1}\bcr(\mc{T}^k)=O$. Thus $N(\bcr(\mc{T}^D))=N(\bcr(\mc{T}^k))$ and completes the proof.
 \end{proof}
\begin{lemma}\label{pfullrank}
Let $ \mc{S}\in\mathbb{C}^{q\times p\times n},\mc{T}\in\cpqn$ with $rank(\mc{T})=rn$. If the following are satisfied:
\begin{enumerate}
    \item[\rm (a)] $\rg(\mc{S})$  is a subspace of $\mathbb{C}^{pn}$ with dimension $sn\leq rn$.
    \item[\rm (b)] $\nl(\mc{S})$ is a subspace of $\mathbb{C}^{qn}$ with dimension $qn-sn$.
    \item[\rm (c)] $\mc{S}=\mc{B}*\mc{C}$ is a full-rank decomposition of $\mc{S}$.
    \item[\rm (d)] $\mc{T}^{(2)}_{\rg(\mc{S}),\nl(\mc{S})}$ exists.
\end{enumerate}
Then
\begin{enumerate}
    \item[\rm (i)] $\mc{C}*\mc{T}*\mc{B}$ is invertible. 
    \item[\rm (ii)] $\mc{T}^{(2)}_{\rg(\mc{S}),\nl(\mc{S})}=\mc{B}(\mc{C}*\mc{T}*\mc{B})^{-1}*\mc{C}=\mc{T}^{(2)}_{\rg(\mc{B}),\nl(\mc{C})}$.
\end{enumerate}
\end{lemma}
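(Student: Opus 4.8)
The plan is to reduce everything to Lemma \ref{lembc} and the theorem immediately following it, by showing that the full-rank decomposition $\mc{S} = \mc{B}*\mc{C}$ transports the range/kernel prescription from $\mc{S}$ to the pair $(\mc{B},\mc{C})$. First I would observe that, since $\mc{S} = \mc{B}*\mc{C}$ is a full-rank decomposition, Proposition \ref{rgprop}(i) gives $\rg(\mc{S}) \subseteq \rg(\mc{B})$ and in fact $\rg(\mc{S}) = \rg(\mc{B})$ because $\mc{B}$ has the same $t$-rank $sn$ as $\mc{S}$; dually, using Proposition \ref{rgprop}(ii) applied to $\mc{C}$ (or taking conjugate transposes), $\nl(\mc{S}) = \nl(\mc{C})$. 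Consequently $\mc{T}^{(2)}_{\rg(\mc{S}),\nl(\mc{S})} = \mc{T}^{(2)}_{\rg(\mc{B}),\nl(\mc{C})}$ as soon as the latter is shown to exist, which is exactly the content needed for the last equality in (ii).

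Next I would establish (i). By hypothesis (d) and Lemma \ref{lembc} (equivalence of (i) and (vii), with $\mc{B},\mc{C}$ as above), the existence of $\mc{T}^{(2)}_{\rg(\mc{B}),\nl(\mc{C})}$ forces $\rank(\mc{C}*\mc{T}*\mc{B}) = \rank(\mc{B}) = \rank(\mc{C}) = sn$. Now pass to the block-circulant picture: $\bcr(\mc{C}*\mc{T}*\mc{B})$ is an $sn \times sn$ matrix (because $\mc{B}\in\mathbb{C}^{q\times s\times n}$, $\mc{C}\in\mathbb{C}^{s\times p\times n}$ in the full-rank decomposition of $\mc{S}$, so $\mc{C}*\mc{T}*\mc{B}\in\mathbb{C}^{s\times s\times n}$) of full rank $sn$, hence invertible. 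By Lemma 2.?(c) (the block-circulant inverse formula, item (c) of the first Lemma after \eqref{FFTEq}), $\bcr^{-1}$ of its inverse is a genuine $t$-inverse, so $\mc{C}*\mc{T}*\mc{B}$ is invertible as a tensor. In particular $(\mc{C}*\mc{T}*\mc{B})^{(1)} = (\mc{C}*\mc{T}*\mc{B})^{-1}$, so the formula in the Theorem following Lemma \ref{lembc} specializes to $\mc{T}^{(2)}_{\rg(\mc{B}),\nl(\mc{C})} = \mc{B}*(\mc{C}*\mc{T}*\mc{B})^{-1}*\mc{C}$, giving (ii).

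The main obstacle, and the step I would spend the most care on, is pinning down why the full-rank decomposition really yields equalities of $t$-range and $t$-null spaces rather than just inclusions — i.e. verifying that conditions (a)--(c) are compatible and that $\rank(\mc{B}) = \rank(\mc{C}) = sn$ with $\rg(\mc{B}) = \rg(\mc{S})$, $\nl(\mc{C}) = \nl(\mc{S})$. Concretely, one writes $\bcr(\mc{S}) = \bcr(\mc{B})\bcr(\mc{C})$; since this is a rank-$sn$ factorization of matrices, $\bcr(\mc{B})$ has full column rank $sn$ and $\bcr(\mc{C})$ has full row rank $sn$, whence $R(\bcr(\mc{S})) = R(\bcr(\mc{B}))$ and $N(\bcr(\mc{S})) = N(\bcr(\mc{C}))$, which is precisely $\rg(\mc{S}) = \rg(\mc{B})$ and $\nl(\mc{S}) = \nl(\mc{C})$ by the definition of $t$-range and $t$-null space. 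Once that is in place, parts (i) and (ii) are immediate from Lemma \ref{lembc} and its corollary theorem, with only the bookkeeping of dimensions ($sn \le rn$ from (a), and the size of $\mc{C}*\mc{T}*\mc{B}$) to check.
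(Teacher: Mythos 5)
Your proof is correct, but it follows a genuinely different route from the paper. The paper's proof is a pure transfer argument: it rewrites hypotheses (a)--(d) in terms of $\bcr(\mc{S})$, $\bcr(\mc{T})$, $\bcr(\mc{B})$, $\bcr(\mc{C})$, invokes the known matrix result (Proposition 2.1 of the cited matrix paper) to get invertibility of $\bcr(\mc{C})\bcr(\mc{T})\bcr(\mc{B})$ together with the representation $\bcr(\mc{B})(\bcr(\mc{C})\bcr(\mc{T})\bcr(\mc{B}))^{-1}\bcr(\mc{C})$, and then applies $\bcr^{-1}$. You instead stay inside the paper's own tensor machinery: you show from the full-rank factorization $\bcr(\mc{S})=\bcr(\mc{B})\bcr(\mc{C})$ that $\rg(\mc{S})=\rg(\mc{B})$ and $\nl(\mc{S})=\nl(\mc{C})$ (so the two outer inverses coincide), then use the equivalence {\rm (i)}$\Leftrightarrow${\rm (vii)} of Lemma \ref{lembc} to get $\rank(\mc{C}*\mc{T}*\mc{B})=sn$, hence invertibility of the $s\times s\times n$ tensor $\mc{C}*\mc{T}*\mc{B}$, and finally specialize the representation $\mc{B}*(\mc{C}*\mc{T}*\mc{B})^{(1)}*\mc{C}$ from the theorem following Lemma \ref{lembc} by noting that for an invertible tensor the unique $\{1\}$-inverse is the inverse. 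Your version is more self-contained (no appeal to the external matrix proposition) and makes explicit the subspace bookkeeping $\rg(\mc{S})=\rg(\mc{B})$, $\nl(\mc{S})=\nl(\mc{C})$ that the paper leaves implicit in its citation; the paper's version is shorter and uniform with its other bcirc-based proofs. One small point of precision: to pass from ``$\bcr(\mc{C}*\mc{T}*\mc{B})$ is an invertible $sn\times sn$ matrix'' to ``$\mc{C}*\mc{T}*\mc{B}$ is invertible as a tensor'' you should not cite the identity $(\bcr(\mc{S}))^{-1}=\bcr(\mc{S}^{-1})$, which presupposes the tensor inverse exists; the correct justification is that, by the DFT block-diagonalization \eqref{FFTEq}, the inverse of an invertible block circulant matrix is again block circulant, hence equals $\bcr(\mc{X})$ for some tensor $\mc{X}$, which is then the $t$-inverse. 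This is a wording fix, not a gap.
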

\begin{proof}
Let $S=\bcr(\mc{S})$, $T=\bcr(\mc{T})$, $B=\bcr(\mc{B})$ and $\bcr(\mc{C})$. From the given assumption, we have 
\begin{itemize}
    \item $R(\bcr(\mc{S}))$  is a subspace of $\mathbb{C}^{pn}$ with dimension $sn\leq rn$.
    \item $N(\bcr(\mc{S
    }))$ is a subspace of $\mathbb{C}^{qn}$ with dimension $qn-sn$.
    \item $\bcr(\mc{S})=\bcr(\mc{B})*\bcr(\mc{C})$ is a full-rank decomposition of $\bcr(\mc{S})$.
    \item $\bcr(\mc{T})^{(2)}_{\rg(\bcr(\mc{S})),\nl(\bcr(\mc{S}))}$ exists.
\end{itemize}
By using Proposition 2.1 \cite{peduotmat}, we obtain
$\bcr(\mc{C})\bcr(\mc{T})\bcr(\mc{B})$ invertible and 
\begin{eqnarray*}\label{fl-rank}
\bcr(\mc{T})^{(2)}_{\rg(\bcr(\mc{S})),\nl(\bcr(\mc{S}))}&=&\bcr(\mc{B})(\bcr(\mc{C})\bcr(\mc{T})\bcr(\mc{B}))^{-1}\bcr(\mc{C})\\
&=&\bcr(\mc{T})^{(2)}_{\rg(\bcr(\mc{B})),\nl(\bcr(\mc{C}))}. 
\end{eqnarray*}
Applying the inverse operation $\bcr^{-1}$, we can get the invertibility of $\mc{C}*\mc{T}*\mc{B}$ and  $\mc{T}^{(2)}_{\rg(\mc{S}),\nl(\mc{S})}=\mc{B}(\mc{C}*\mc{T}*\mc{B})^{-1}*\mc{C}=\mc{T}^{(2)}_{\rg(\mc{B}),\nl(\mc{C})}$.
\end{proof}

Chan \cite{Chan1987rank} presented an algorithm for computing a column permutation $P$ and a QR factorization $AP = QR$ of $m > n$ matrix $A$ such that a possible rank 
deficiency of $A$ will be revealed in the triangular factor $R$. Next we extend it to the tensor case.

\begin{theorem}\label{qrthm}
 Let $ \mc{S}\in\cpqn$, $\mc{T}\in\mathbb{C}^{q\times p\times n}$,  $\rank(\mc{T})=sn$,  and $\rank(\mc{S})=rn$ with $sn\leq rn$. Consider the $\mc{Q}*\mc{R}$  decomposition of $\mc{T}$ is of the form
 \begin{equation}
     \mc{T}*\mc{P}=\mc{Q}*\mc{R},
 \end{equation}
 where $\mc{P}\in\mathbb{C}^{p\times p\times n}$ is a permutation tensor, $\mc{Q}\in \mathbb{C}^{q\times q\times n}$ satisfying $\mc{Q}*\mc{Q}=\mc{I}_{qqn}$, and $\mc{R}\in\mathbb{C}^{q\times p\times n}$ with $\rank(\mc{R})=sn$. The tensor $\mc{P}$ to be chosen so that it partitions $\mc{Q}$ and $\mc{R}$ in the following form
 \begin{equation}
     \mc{Q}=\begin{bmatrix}
      \tilde{\mc{Q}} & \mc{Q}_2   
     \end{bmatrix},~\mc{R}=\begin{bmatrix}
         \mc{R}_{11} & \mc{R}_{22}\\
         \mc{O} &\mc{O}
     \end{bmatrix}=\begin{bmatrix}
         \tilde{\mc{R}}\\
         \mc{O}
     \end{bmatrix},
 \end{equation}
 where $\tilde{\mc{Q}}\in\mathbb{C}^{q\times s\times n}$, $\mc{R}_{11}\in\mathbb{C}^{s\times s\times n}$ is nonsingular and $\tilde{\mc{R}}\in\mathbb{C}^{s\times p\times n}$. If $\mc{S}^{(2)}_{\rg(\mc{T}),\nl(\mc{T})}$ exists then the following are holds:
 \begin{enumerate}
\item[\rm (i)] $\tilde{\mc{R}}*\mc{P}^**\mc{S}*\tilde{\mc{Q}}$ is invertible.
\item[\rm (ii)] $\mc{S}^{(2)}_{\rg(\mc{T}),\nl(\mc{T})}=\tilde{\mc{Q}}*(\tilde{\mc{R}}*\mc{P}^**\mc{S}*\tilde{\mc{Q}})^{-1}*\tilde{\mc{R}}*\mc{P}^*$.
\item[\rm (iii)] $\mc{S}^{(2)}_{\rg(\mc{T}),\nl(\mc{T})}=\mc{S}^{(2)}_{\rg(\tilde{\mc{Q}}),\nl(\tilde{\mc{R}}*\mc{P}^*)}$.
\item[\rm (iv)] $\mc{S}^{(2)}_{\rg(\mc{T}),\nl(\mc{T})}=\tilde{\mc{Q}}*(\mc{Q}_{1}^**\mc{T}*\mc{S}*\mc{Q}_{1})^{-1}*\tilde{\mc{Q}}^**\mc{T}$.
 \end{enumerate}
\end{theorem}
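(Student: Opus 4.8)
The plan is to extract from the $t$-QR factorization a full-rank decomposition of $\mc{T}$ and then feed it into Lemma~\ref{pfullrank} (which itself rests on Proposition~2.1 of \cite{peduotmat} transported through $\bcr$). First I would spell out the block products. Since $\mc{R}=\begin{bmatrix}\tilde{\mc{R}}\\ \mc{O}\end{bmatrix}$ and $\mc{Q}=\begin{bmatrix}\tilde{\mc{Q}} & \mc{Q}_2\end{bmatrix}$, block multiplication of $t$-products gives $\mc{Q}*\mc{R}=\tilde{\mc{Q}}*\tilde{\mc{R}}$, so $\mc{T}*\mc{P}=\tilde{\mc{Q}}*\tilde{\mc{R}}$; using that a permutation tensor satisfies $\mc{P}*\mc{P}^*=\mc{P}^**\mc{P}=\mc{I}_{ppn}$, this yields
\[\mc{T}=\tilde{\mc{Q}}*(\tilde{\mc{R}}*\mc{P}^*).\]

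Next I would check that $\mc{T}=\tilde{\mc{Q}}*(\tilde{\mc{R}}*\mc{P}^*)$ is a full-rank decomposition. Passing to block circulant matrices, selecting the block columns $\tilde{\mc{Q}}$ of $\mc{Q}$ amounts, up to a fixed column permutation, to selecting $sn$ columns of the unitary matrix $\bcr(\mc{Q})$; hence $\bcr(\tilde{\mc{Q}})$ has orthonormal columns, i.e.\ $\tilde{\mc{Q}}^**\tilde{\mc{Q}}=\mc{I}_{ssn}$ and $\rank(\tilde{\mc{Q}})=sn$. Likewise, selecting the block rows $\tilde{\mc{R}}$ of $\mc{R}$ selects $sn$ rows of $\bcr(\mc{R})$, so $\rank(\tilde{\mc{R}})=\rank(\mc{R})=sn$, and since $\bcr(\mc{P}^*)$ is invertible also $\rank(\tilde{\mc{R}}*\mc{P}^*)=sn$. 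Now Proposition~\ref{rgprop}(i)--(ii) applied to $\mc{T}=\tilde{\mc{Q}}*(\tilde{\mc{R}}*\mc{P}^*)$ gives $\rg(\mc{T})\subseteq\rg(\tilde{\mc{Q}})$ and $\nl(\tilde{\mc{R}}*\mc{P}^*)\subseteq\nl(\mc{T})$; comparing $t$-ranks (both $sn$) and nullity dimensions (both $pn-sn$) forces
\[\rg(\tilde{\mc{Q}})=\rg(\mc{T}),\qquad \nl(\tilde{\mc{R}}*\mc{P}^*)=\nl(\mc{T}),\]
which is exactly (iii); in particular $\mc{S}^{(2)}_{\rg(\tilde{\mc{Q}}),\nl(\tilde{\mc{R}}*\mc{P}^*)}$ exists whenever $\mc{S}^{(2)}_{\rg(\mc{T}),\nl(\mc{T})}$ does.

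Then I would invoke Lemma~\ref{pfullrank} with the roles of its $\mc{S},\mc{T}$ played by our $\mc{T},\mc{S}$ and full-rank factors $\mc{B}=\tilde{\mc{Q}}$, $\mc{C}=\tilde{\mc{R}}*\mc{P}^*$: hypotheses (a)--(d) are exactly the range/null-space dimension facts just established together with $\rank(\mc{S})=rn\ge sn$. The lemma yields that $(\tilde{\mc{R}}*\mc{P}^*)*\mc{S}*\tilde{\mc{Q}}$ is invertible, which is (i), and
\[\mc{S}^{(2)}_{\rg(\mc{T}),\nl(\mc{T})}=\tilde{\mc{Q}}*\big((\tilde{\mc{R}}*\mc{P}^*)*\mc{S}*\tilde{\mc{Q}}\big)^{-1}*(\tilde{\mc{R}}*\mc{P}^*)=\mc{S}^{(2)}_{\rg(\tilde{\mc{Q}}),\nl(\tilde{\mc{R}}*\mc{P}^*)},\]
which is (ii) (and re-proves (iii)). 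For (iv) I would left-multiply $\mc{T}*\mc{P}=\tilde{\mc{Q}}*\tilde{\mc{R}}$ by $\tilde{\mc{Q}}^*$ and use $\tilde{\mc{Q}}^**\tilde{\mc{Q}}=\mc{I}_{ssn}$, then right-multiply by $\mc{P}^*$ and use $\mc{P}*\mc{P}^*=\mc{I}_{ppn}$, obtaining the identity $\tilde{\mc{R}}*\mc{P}^*=\tilde{\mc{Q}}^**\mc{T}$; substituting this into (ii) (with $\mc{Q}_1=\tilde{\mc{Q}}$) gives $\mc{S}^{(2)}_{\rg(\mc{T}),\nl(\mc{T})}=\tilde{\mc{Q}}*(\tilde{\mc{Q}}^**\mc{T}*\mc{S}*\tilde{\mc{Q}})^{-1}*\tilde{\mc{Q}}^**\mc{T}$.

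The step I expect to need the most care is the bookkeeping in the second paragraph: verifying that taking block columns (resp.\ block rows) of a third-order tensor corresponds, under $\bcr$, to a column (resp.\ row) selection from the block circulant matrix up to a fixed permutation, so that orthonormality and the rank equalities transfer cleanly. Once that dictionary is in place, the remainder is a routine translation through $\bcr$/$\bcr^{-1}$ and a direct application of Lemma~\ref{pfullrank}.
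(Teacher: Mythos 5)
Your proposal is correct and follows essentially the same route as the paper: extract the full-rank decomposition $\mc{T}=\tilde{\mc{Q}}*(\tilde{\mc{R}}*\mc{P}^*)$ from the partitioned $t$-QR factorization, verify the rank conditions, apply Lemma~\ref{pfullrank} to obtain (i)--(iii), and get (iv) by substituting $\tilde{\mc{R}}*\mc{P}^*=\tilde{\mc{Q}}^**\mc{T}$ into (ii). You simply supply more of the bookkeeping (orthonormality of $\bcr(\tilde{\mc{Q}})$, rank transfer under $\bcr$, and the clean derivation of the identity used in (iv), which the paper states with minor typographical slips) than the paper's condensed argument does.
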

 \begin{proof}
 Let $ \mc{T}*\mc{P}=\mc{Q}*\mc{R}$. Then using the partition of $\mc{Q}$ and $\mc{R}$ we obtain $\mc{T}=\mc{Q}*\mc{R}*\mc{P}^*=\tilde{\mc{Q}}*\tilde{\mc{R}}*\mc{P}^*$. Thus  $\bcr(\mc{T})=\bcr(\tilde{\mc{Q}})\bcr(\tilde{\mc{R}}*\mc{P}^{*})$. It can be easily proved that $\text{rank}(\bcr(\tilde{\mc{Q}}))=sn=\text{rank}(\bcr(\tilde{\mc{R}}*\mc{P}^*))$ and hence $\tilde{\mc{Q}}*\tilde{\mc{R}}*\mc{P}^*$ is full-rank decomposition of $\mc{T}$. Applying Lemma \ref{fl-rank}, we conclude the proof of parts (i), (ii), and (iii).\\
    (iv) It follows by substituting $\tilde{\mc{R}}*\mc{P}=\mc{Q}^**\mc{T}$ in part (ii).
 \end{proof}
\begin{algorithm}[hbt!]
  \caption{Outer inverse using $t$-QR decomposition $\mc{X} = \mc{S}^{(2)}_{\rg({T}), \nl({T})}$} \label{AlgonullQR}
  \begin{algorithmic}[1]
    \Procedure{oinverse}{$\mc{S}$}
    \State \textbf{Input} $ \mc{S}\in\cpqn$, $\mc{T}\in\mathbb{C}^{q\times p\times n}$,  $\rank(\mc{T})=sn \leq rn = \rank(\mc{S})$.
\For{$i \gets 1$ to $n$} 
      \State $\mc{T}=\textup{fft}(\mc{T}, [~ ], i);$
      \EndFor
    \For{$i \gets 1$ to $n$} 
   \State $[\mc{Q}(:,:,i), \mc{R}(:,:,i), \mc{P}(:,:,i)] = \texttt{qr}(\mc{T}(:,:,i))$ 
   \EndFor
\For{$i \gets n$ to $1$} 
      \State $\mc{Q} \leftarrow \textup{ifft}(\mc{Q}, [~ ], i);$,~$\mc{R} \leftarrow \textup{ifft}(\mc{R}, [~ ], i);$, $\mc{P} \leftarrow \textup{ifft}(\mc{P}, [~ ], i);$
      \EndFor
 \State Here $\mc{Q}$ and $\mc{R}$ will be in the following form
 \begin{equation*}
     \mc{Q}=\begin{bmatrix}
      \tilde{\mc{Q}} & \bar{\mc{Q}}   
     \end{bmatrix},~\mc{R}=\begin{bmatrix}
         \bar{\mc{R}} & \bar{\bar{\mc{R}}}\\
         \mc{O} &\mc{O}
     \end{bmatrix}=\begin{bmatrix}
         \tilde{\mc{R}}\\
         \mc{O}
     \end{bmatrix},
 \end{equation*}
 \hspace{.5cm} where $\tilde{\mc{Q}}\in\mathbb{C}^{p\times s\times n}, \tilde{\mc{R}}\in\mathbb{C}^{s\times p\times n}$ are block tensors and $\bar{\mc{R}}\in\mathbb{C}^{s\times s\times n}$ is nonsingular block tensors. 
\State \textbf{compute} $\mc{Y} = \tilde{\mc{R}}*\mc{P}^**\mc{S}*\tilde{\mc{Q}}$ and $\mc{Z} = \tilde{\mc{Q}}^**\mc{T}*\mc{S}*\tilde{\mc{Q}}$

\For{$i \gets 1$ to $n$} 
      \State $\mc{Y}=\textup{fft}(\mc{Y}, [~ ], i)$,~ $\mc{Z}=\textup{fft}(\mc{Z}, [~ ], i);$
      \EndFor
    \For{$i \gets 1$ to $n$} 
   \State $\mc{Y}(:,:,i) = \texttt{inv}(\mc{Y}(:,:,i))$,~~$\mc{Z}(:,:,i) = \texttt{inv}(\mc{Z}(:,:,i))$  
   \EndFor
\For{$i \gets n$ to $1$} 
      \State $\mc{Y} \leftarrow \textup{ifft}(\mc{Y}, [~ ], i)$,~~$\mc{Z} \leftarrow \textup{ifft}(\mc{Z}, [~ ], i);$, 
      \EndFor
\State \textbf{compute}
\begin{eqnarray*}
\mc{S}^{(2)}_{\rg(\mc{T}),\nl(\mc{T})} &=&\tilde{\mc{Q}}*\mc{Y}*\tilde{\mc{R}}*\mc{P}^*.\\
&=&\tilde{\mc{Q}}*\mc{Z}*\tilde{\mc{Q}}^**\mc{T}.
\end{eqnarray*}
   \State \textbf{return } $\mc{S}^{(2)}_{\rg({T}), \nl({T})}$ 
    \EndProcedure
  \end{algorithmic}
\end{algorithm}

\begin{corollary}\label{MCor}
Let $\mc{S}\in\cpqn$, $\mc{T}\in\mathbb{C}^{q\times p\times n}$,  $\rank(\mc{T})=sn \leq rn=\rank(\mc{S})$. Let $\tilde{\mc{Q}}*(\tilde{\mc{R}}*\mc{P}^*)$ be the full-rank decomposition of $\mc{T}$, where $\mc{P}$, $\mc{Q}$, $\mc{R}$, $\tilde{\mc{Q}}$ and $\tilde{\mc{R}}$ as defined in Theorem \ref{qrthm}. Then 
$$\mc{S}^{(2)}_{\rg(\tilde{\mc{Q}}),\nl(\tilde{\mc{R}}*\mc{P}^*)}=\left\{
\begin{array}{ll}
\mc{A}^\dagger, & \mbox{ if }\mc{T}=\mc{S}^*.\\
\mc{A}^D, & \mbox{ if }\mc{T}=\mc{S}^k,\ k\geq ind(\mc{S}),\mbox{ and }\mc{S}\in\mathbb{C}^{p\times p\times n}.
\end{array}
\right.$$
 \end{corollary}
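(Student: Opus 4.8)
## Proof Proposal for Corollary \ref{MCor}

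The plan is to combine Theorem \ref{qrthm}(iii) with the preceding theorem that identifies the Moore--Penrose and Drazin inverses as outer inverses with prescribed range and null space. First I would record the key identity from Theorem \ref{qrthm}(iii): under the stated hypotheses,
\[
\mc{S}^{(2)}_{\rg(\tilde{\mc{Q}}),\nl(\tilde{\mc{R}}*\mc{P}^*)}=\mc{S}^{(2)}_{\rg(\mc{T}),\nl(\mc{T})}.
\]
This reduces the problem to showing that $\mc{S}^{(2)}_{\rg(\mc{T}),\nl(\mc{T})}$ specializes to $\mc{A}^\dagger$ (resp. $\mc{A}^D$) for the two choices of $\mc{T}$; here $\mc{A}=\mc{S}$ in the notation of the corollary.

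For the first case, I would set $\mc{T}=\mc{S}^*$. Then $\rg(\mc{T})=\rg(\mc{S}^*)$ and $\nl(\mc{T})=\nl(\mc{S}^*)$, so the identity above gives $\mc{S}^{(2)}_{\rg(\tilde{\mc{Q}}),\nl(\tilde{\mc{R}}*\mc{P}^*)}=\mc{S}^{(2)}_{\rg(\mc{S}^*),\nl(\mc{S}^*)}$, and by part (i) of the theorem on special cases (the one stating $\mc{S}^\dagger=\mc{S}^{(2)}_{\rg(\mc{S}^*),\nl(\mc{S}^*)}$) this equals $\mc{S}^\dagger$. One should check the rank compatibility: for $\mc{T}=\mc{S}^*$ we have $\rank(\mc{T})=\rank(\bcr(\mc{S})^*)=\rank(\bcr(\mc{S}))=\rank(\mc{S})$, so $sn=rn$ and the hypothesis $sn\le rn$ holds with equality, and the existence of $\mc{S}^{(2)}_{\rg(\mc{S}^*),\nl(\mc{S}^*)}$ is automatic since $\mc{S}^\dagger$ always exists; this validates invoking Theorem \ref{qrthm}.

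For the second case, I would take $\mc{T}=\mc{S}^k$ with $k\ge ind(\mc{S})$ and $\mc{S}$ square. By definition of $ind_t$, for $k\ge ind(\mc{S})$ we have $\rank(\bcr(\mc{S})^k)=\rank(\bcr(\mc{S})^{ind})$ stabilized, so $\rank(\mc{S}^k)=sn$ is a multiple of $n$ not exceeding $\rank(\mc{S})=rn$, keeping us within the hypotheses of Theorem \ref{qrthm}. Then $\rg(\mc{T})=\rg(\mc{S}^k)$, $\nl(\mc{T})=\nl(\mc{S}^k)$, and the identity yields $\mc{S}^{(2)}_{\rg(\tilde{\mc{Q}}),\nl(\tilde{\mc{R}}*\mc{P}^*)}=\mc{S}^{(2)}_{\rg(\mc{S}^k),\nl(\mc{S}^k)}=\mc{S}^D$, the last equality by part (ii) of the special-cases theorem. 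Since $\mc{S}^D$ exists for any square tensor, hypothesis (d)-type existence conditions are again met.

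The main obstacle I anticipate is purely bookkeeping rather than conceptual: one must be careful that the rank hypotheses $\rank(\mc{T})=sn\le rn=\rank(\mc{S})$ are genuinely satisfied in each case (for the Drazin case this uses the index stabilization property crucially), and that the existence condition "$\mc{S}^{(2)}_{\rg(\mc{T}),\nl(\mc{T})}$ exists" needed to apply Theorem \ref{qrthm} follows from the known existence of $\mc{S}^\dagger$ and $\mc{S}^D$. Once these compatibility checks are in place, the corollary is an immediate consequence of Theorem \ref{qrthm}(iii) chained with the earlier identification theorem; no further computation is required.
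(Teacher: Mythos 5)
Your proposal is correct and takes exactly the route the paper intends: the corollary is stated there without a separate proof, being an immediate consequence of Theorem \ref{qrthm} (iii) combined with the earlier theorem identifying $\mc{S}^{\dagger}=\mc{S}^{(2)}_{\rg(\mc{S}^*),\nl(\mc{S}^*)}$ and $\mc{S}^{D}=\mc{S}^{(2)}_{\rg(\mc{S}^k),\nl(\mc{S}^k)}$, which is precisely the chain you use. Your rank-compatibility and existence checks (needed to invoke Theorem \ref{qrthm}) are just the bookkeeping the paper leaves implicit, and they are carried out correctly.
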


\section{Numerical Examples}
In this section, we have computed the examples by MATLAB, R2022b, which runs on a personal computer with a central processing unit [12-Core Intel Xeon W   3.3 GHz], 96GB of memory, and the macOS Ventura operating system. 
\begin{example}
Let $ \mathcal{T} = \mc{S}^T $ and $\mc{S}\in \mathbb{R}^{3 \times 4 \times 2}$  with frontal slices 
\begin{eqnarray*}
\mc{S}_{1} =
    \begin{pmatrix}
    0   &          -1       &      -1     &        -1\\       
       0       &       1     &        -1     &         1  \\     
       0       &       0       &       0       &       0  
    \end{pmatrix},~
\mc{S}_{2} =
    \begin{pmatrix}
   1        &        1        &        1       &         0     \\  
      -1       &        -1      &          1       &         1\\      
       0       &         0       &         0       &         0 
    \end{pmatrix}.
    \end{eqnarray*}
 With the help of the above Algorithm \ref{AlgonullQR}, we compute the block tensor $\tilde{\mc{Q}} \in \mathbb{R}^{4\times 2 \times 2}$, where the slices are given by

\begin{eqnarray*}
\tilde{\mc{Q}}_1 =
    \begin{pmatrix}
-1/4     &    -379/1676  \\
     195/5042   &   -401/932  \\ 
    1358/2521   &    485/15466 \\
    1358/2521   &  -1549/5542 \\
\end{pmatrix},~
\tilde{\mc{Q}}_2 =
    \begin{pmatrix}
      1/4       &   379/1676 \\ 
    1358/2521    &   108/4907  \\
     195/5042     &  486/619 \\  
     195/5042     & -305/2369 \\ 
 \end{pmatrix}.
    \end{eqnarray*}
Further, using  Corollary \ref{MCor} we compute $\mc{X}=\mc{S}^\dagger = \mc{S}^{(2)}_{\rg(\tilde{\mc{Q}}),\nl(\tilde{\mc{R}}*\mc{P}^*)} = \tilde{\mc{Q}}*(\tilde{\mc{Q}}^**\mc{T}*\mc{S}*\tilde{\mc{Q}})^{-1}*\tilde{\mc{Q}}^**\mc{T}$, where the frontal slices are 
\begin{eqnarray*}
\mc{X}_{1} =
    \begin{pmatrix}
   -2/25     &      1/10    &       0  \\     
      -9/50    &       3/10      &     0    \\   
      -3/25   &       -1/10    &       0  \\     
      -3/25    &       1/10      &     0  
    \end{pmatrix},~
\mc{X}_{2} =
    \begin{pmatrix}
      2/25     &     -1/10   &        0 \\      
      -1/50     &      1/10   &        0   \\    
       3/25     &      1/10   &        0  \\     
      -2/25    &       3/10    &       0  \\ 
    \end{pmatrix}. 
    \end{eqnarray*}
\end{example}

\begin{example}
Let $ \mathcal{T} = \mc{S} $ and $\mc{S}\in \mathbb{R}^{4 \times 4 \times 2}$  with frontal slices 
\begin{eqnarray*}
\mc{S}_{1} =
    \begin{pmatrix}
     2     &         2        &      0       &      -1 \\      
       2      &        4      &        0     &         1 \\      
       0     &         0      &        4    &          1 \\      
      -1     &         1     &         1   &           3   
    \end{pmatrix},~
\mc{S}_{2} =
    \begin{pmatrix}
  0       &      -2         &     0         &    -2  \\     
      -2    &         -4     &         0      &       -1   \\    
       0     &         0     &        -4      &       -1 \\      
      -2     &        -1   &          -1     &         2      
    \end{pmatrix}.
    \end{eqnarray*}
 With the help of the above Algorithm \ref{AlgonullQR}, we compute the block tensor $\tilde{\mc{Q}} \in \mathbb{R}^{4\times 2 \times 2}$, where the slices are given by

\begin{eqnarray*}
\tilde{\mc{Q}}_1 =
    \begin{pmatrix}
-769/1762  &    -332/14353 \\
    -769/881 &      -263/5685  \\
       0      &     1841/1895\\  
    -769/3524  &     263/1137 
\end{pmatrix},~
\tilde{\mc{Q}}_2 =
    \begin{pmatrix}
      0      &  0 \\ 
    0    &   0  \\
     0     & 0 \\  
     0    & 0 
 \end{pmatrix}
    \end{eqnarray*}
Further, using Corollary \ref{MCor} we compute $\mc{X}=\mc{S}^{\#} = \mc{S}^{(2)}_{\rg(\tilde{\mc{Q}}),\nl(\tilde{\mc{R}}*\mc{P}^*)} = \tilde{\mc{Q}}*(\tilde{\mc{Q}}^**\mc{T}*\mc{S}*\tilde{\mc{Q}})^{-1}*\tilde{\mc{Q}}^**\mc{T}$, where the frontal slices are 
\begin{eqnarray*}
\mc{X}_{1} =
    \begin{pmatrix}
   2193/874     &   145/7921   &    -19/7921   &   2080/1383 \\ 
    2193/437     &   290/7921   &    -38/7921  &    1513/503  \\ 
     416/417     &   -38/7921   &    442/7921  &     556/549 \\  
    2080/1383   &     63/7921   &    101/7921  &     971/966  
    \end{pmatrix},
     \end{eqnarray*}
     \begin{eqnarray*}
\mc{X}_{2} =
    \begin{pmatrix}
      2177/874   &    -145/7921    &    19/7921   &   2069/1383 \\ 
    2177/437    &   -290/7921    &    38/7921    &  1505/503   \\
     418/417    &     38/7921   &   -442/7921   &    542/549  \\ 
    2069/1383   &    -63/7921  &    -101/7921  &     961/966  
    \end{pmatrix}. 
    \end{eqnarray*}
\end{example}
The errors associated to different matrix and tensor equations are represented in the below Table \ref{tab:error}.
\begin{table}[htb!]
    \centering
    \caption{Errors associated to matrix and tensor equations}
    \vspace*{0.2cm}
    \renewcommand{\arraystretch}{1.2}
    \begin{tabular}{ccc}
    \hline
     $\mc{E}^t_1 = \|\mc{S}-\mc{S}*\mc{X}*\mc{S}\|_F$  & $\mc{E}^t_{2} = \|\mc{X}-\mc{X}*\mc{S}*\mc{X}\|_F$ & ${E}^{t}_{3} =\|\mc{S}*\mc{X}-(\mc{S}*\mc{X})^T\|_F$ \\     
    $\mc{E}^{t}_{4} =\|\mc{X}*\mc{S}-(\mc{X}*\mc{S})^T\|_F$&$\mc{E}^{t}_{5} = \|\mc{S}*\mc{X}-\mc{X}*\mc{S}\|_F$
       &$\mc{E}^{t}_{1^k} = \|\mc{X}*\mc{S}^{k} -\mc{S}^k\|_F$ \\      
       ${E}^m_1 = \|{S}-{S}{X}{S}\|_F$  &${E}^m_{2} = \|{X}-{X}{S}{X}\|_F$&${E}^{m}_{3} =\|{S}{X}-({S}{X})^T\|_F$ \\
       ${E}^{m}_{4} =\|{X}{S}-({X}{S})^T\|_F$&${E}^{m}_{5} = \|{S}{X}-XS\|_F$
       &${E}^{m}_{1^k} = \|XS^{k+1} -{S}^k\|_F$ \\
         \hline
             \end{tabular}
       \label{tab:error}
\end{table}

Next, we have compared the errors and mean CPU time for computing the the Moore-Penrose inverse and Drazin inverse by using $t$-QR algorithm \ref{AlgonullQR}. It is observed that for large matrices, the tensor structured computation are more efficient than matrix based computations as shown in Tables \ref{tab:error-chow}-\ref{tab:error-gearmat}. In Tables \ref{tab:error-chow}-\ref{tab:error-gearmat}, MT$^t$ and MT$^m$ respectively represents the mean CPU time for tensor and matrix  structured computations. 
    \begin{table}[H] 
        \begin{center}
       \caption{Error analysis and computation mean time for $chow$, when $\mc{X}=\mc{S}^{\dagger}$ and $X=S^{\dagger}$}
          \renewcommand{\arraystretch}{1.2}
    \begin{tabular}{|c|c|c|c|c|c|}
    \hline
        Size of $\mc{S}$ & Size of $S$ & MT$^t$ & MT$^m$ &Error$^t$ &Error$^m$\\ 
        \hline
    \multirow{4}{*}{$400\times 400\times 400$} & \multirow{4}{*}{$8000\times 8000$}&  \multirow{4}{*}{25.50} &\multirow{4}{*}{35.00}&
    $\mc{E}^{t}_{1}= 1.157e^{-10}$  & 
    ${E}^{m}_{1}=1.591e^{-09}$ \\     
& &  & &$\mc{E}^{t}_{2}= 1.046e^{-16}$  
          &${E}^{m}_{2}=9.898e^{-11}$ \\
& &  & &$\mc{E}^{t}_{3}=4.440e^{-14}$  
       &${E}^{m}_{3}=1.029e^{-10}$ \\
& &  & &$\mc{E}^{t}_{4}=5.645e^{-13}$  
       &${E}^{m}_{4}=3.126e^{-09}$ \\
    \hline
    \multirow{4}{*}{$500\times 500\times 400$} & \multirow{4}{*}{$10000\times 10000$}&  \multirow{4}{*}{41.26} &\multirow{4}{*}{64.48}&
    $\mc{E}^{t}_{1}= 3.121e^{-10}$  & 
    ${E}^{m}_{1}=1.577e^{-09}$ \\     
& &  & &$\mc{E}^{t}_{2}= 2.146e^{-16}$  
&${E}^{m}_{2}=1.561e^{-10}$ \\& &  & &
$\mc{E}^{t}_{3}=9.100e^{-14}$  
&${E}^{m}_{3}=1.642e^{-10}$ \\
& &  & &$\mc{E}^{t}_{4}=1.552e^{-12}$  
&${E}^{m}_{4}=3.115e^{-09}$ \\
    \hline
    \multirow{4}{*}{$800\times 800\times 400$} & \multirow{4}{*}{$16000\times 16000$}&  \multirow{4}{*}{115.44} &\multirow{4}{*}{244.64}&
    $\mc{E}^{t}_{1}=4.654e^{-10}$  & ${E}^{m}_{1}=4.569e^{-09}$ \\     
& &  & &$\mc{E}^{t}_{2}= 3.306e^{-16}$  &${E}^{m}_{2}=2.994e^{-10}$ \\& &  & &$\mc{E}^{t}_{3}=1.333e^{-13}$  &${E}^{m}_{3}=3.176e^{-10}$ \\
& &  & &$\mc{E}^{t}_{4}=2.252e^{-12}$  &${E}^{m}_{4}= 9.068e^{-09}$ \\
       \hline
    \end{tabular}
    \label{tab:error-chow}  
          \end{center}
\end{table}

    \begin{table}[H]
    \begin{center}
       \caption{Error analysis and computation mean time for $kahan$, when $\mc{X}=\mc{S}^{\dagger}$ and $X=S^{\dagger}$}
          \renewcommand{\arraystretch}{1.2}
    \begin{tabular}{|c|c|c|c|c|c|}
    \hline
        Size of $\mc{S}$ & Size of $S$ & MT$^t$ & MT$^m$ &Error$^t$ &Error$^m$\\ 
        \hline
    \multirow{4}{*}{$400\times400\times 400$} & \multirow{4}{*}{$8000\times 8000$}&  \multirow{4}{*}{26.43} &\multirow{4}{*}{33.07}&
    $\mc{E}^{t}_{1}= 6.195e^{-12}$  & 
    ${E}^{m}_{1}=1.485e^{-09}$ \\     
& &  & &$\mc{E}^{t}_{2}=3.638e^{-07}$  
          &${E}^{m}_{2}=1.885e^{-02}$ \\
& &  & &$\mc{E}^{t}_{3}=6.209e^{-05}$  
       &${E}^{m}_{3}=1.070e^{-02}$ \\
& &  & &$\mc{E}^{t}_{4}=3.207e^{-14}$  
       &${E}^{m}_{4}=3.529e^{-01}$ \\
    \hline
    \multirow{4}{*}{$500\times 500\times 400$} & \multirow{4}{*}{$10000\times 10000$}&  \multirow{4}{*}{43.20} &\multirow{4}{*}{64.67}&
    $\mc{E}^{t}_{1}= 8.208e^{-11}$  & ${E}^{m}_{1}=2.076e^{-09}$ \\     
& &  & &$\mc{E}^{t}_{2}= 9.919e^{-07}$  &${E}^{m}_{2}=1.796e^{-02}$ \\& &  & &$\mc{E}^{t}_{3}=2.182e^{-04}$  &${E}^{m}_{3}=1.011e^{-02}$ \\
& &  & &$\mc{E}^{t}_{4}=1.732e^{-02}$  &${E}^{m}_{4}=3.530e^{-01}$ \\
       \hline
    \end{tabular}
       \label{tab:error-kahan}  
    \end{center}
\end{table}

\begin{table}[H]
    \begin{center}
       \caption{Error analysis and computation mean time for $cycol$, when $\mc{X}=\mc{S}^{\#}$ and $X=S^{\#}$}
          \renewcommand{\arraystretch}{1.2}
    \begin{tabular}{|c|c|c|c|c|c|}
    \hline
        Size of $\mc{S}$ & Size of $S$ & MT$^t$ & MT$^m$ &Error$^t$ &Error$^m$\\ 
        \hline
    \multirow{3}{*}{$400\times 400\times 400$} & \multirow{3}{*}{$8000\times 8000$}&  \multirow{3}{*}{32.61} &\multirow{3}{*}{46.11}&
    $\mc{E}^{t}_{1}= 1.449e^{-08}$  & 
    ${E}^{m}_{1}=3.248e^{-07}$ \\     
& &  & &$\mc{E}^{t}_{2}= 1.536e^{-10}$  
          &${E}^{m}_{2}=9.764e^{-09}$ \\
& &  & &$\mc{E}^{t}_{5}=2.816e^{-10}$  
       &${E}^{m}_{5}=4.263e^{-08}$ \\
    \hline
    \multirow{3}{*}{$500\times 500\times 400$} & \multirow{3}{*}{$10000\times 10000$}&  \multirow{3}{*}{47.02} &\multirow{3}{*}{91.00}&
    $\mc{E}^{t}_{1}= 3.086e^{-08}$  & 
    ${E}^{m}_{1}=1.0552e^{-05}$ \\     
& &  & &$\mc{E}^{t}_{2}= 6.351e^{-10}$  
          &${E}^{m}_{2}=4.645e^{-07}$ \\
& &  & &$\mc{E}^{t}_{5}=4.614e^{-10}$  
       &${E}^{m}_{5}=1.527e^{-06}$ \\
       \hline
    \end{tabular}
       \label{tab:error-magic}  
    \end{center}
\end{table}
\begin{table}[H]
    \begin{center}
       \caption{Error analysis and computation mean time for $gearmat$, when $\mc{X}=\mc{S}^{D}$ and $X=S^{D}$}
          \renewcommand{\arraystretch}{1.2}
     \begin{tabular}{|c|c|c|c|c|c|}
    \hline
        Size of $\mc{S}$ & Size of $S$ & MT$^t$ & MT$^m$ &Error$^t$ &Error$^m$\\ 
        \hline
    \multirow{3}{*}{$400\times 400\times 400$} & \multirow{3}{*}{$8000\times 8000$}&  \multirow{3}{*}{22.35} &\multirow{3}{*}{36.42}&
    $\mc{E}^{t}_{1^2}= 1.120e^{-06}$  & 
    ${E}^{m}_{1^2}=3.301e^{-07}$ \\     
& &  & &$\mc{E}^{t}_{2}= 1.389e^{-13}$  
          &${E}^{m}_{2}=5.852e^{-04}$ \\
& &  & &$\mc{E}^{t}_{5}=2.070e^{-10}$  
       &${E}^{m}_{5}=7.704e^{-04}$ \\
    \hline
    \multirow{3}{*}{$500\times 500\times 400$} & \multirow{3}{*}{$10000\times 10000$}&  \multirow{3}{*}{38.15} &\multirow{3}{*}{75.94}&
    $\mc{E}^{t}_{1^2}= 2.577e^{-06}$  & 
    ${E}^{m}_{1^2}=6.151e^{-07}$ \\     
& &  & &$\mc{E}^{t}_{2}= 9.467e^{-13}$  
          &${E}^{m}_{2}=3.001e^{-03}$ \\
& &  & &$\mc{E}^{t}_{5}=4.332e^{-10}$  
       &${E}^{m}_{5}=1.734e^{-03}$ \\
       \hline
    \end{tabular}
       \label{tab:error-gearmat}  
    \end{center}
\end{table}

Tensor QR factorization is created by applying the appropriate matrix QR factorization, which is discussed the above section, i.e., 
\begin{eqnarray*}
    \mc{A}=\mc{Q}*\mc{R} \Longleftrightarrow \bcr(\mc{A})=\bcr(\mc{Q})~\bcr(\mc{R})
\end{eqnarray*}
It is worth mentioning that the tensor QR factorization algorithm \cite{kilmer11} and the recent randomized sampling technique \cite{Duersch17} motivated us to discuss randomized tensor-based QR factorization algorithms. (See Algorithm~\ref{AlgRQR}). The main features of the algorithm are its high efficiency and easy parallelization. 
Recall the randomized QR decomposition (with column pivoting) of the matrices. Let $\mc{A}$ be $m \times n$ and $k$ be the desired factorization rank, that is, $k \leq \min (m, n)$. Here we use Algorithm 4 of \cite{Duersch17}, i.e.,``Randomized QR with Column Pivoting.''  The matrix computation Matlab function (\texttt{rqrcp}) is utilized, i.e., $[Q, R, P]=\texttt{rqrcp}(\mc{A},k).$ Following this we consider $\mc{A} \in \mathbb{R}^{p\times q \times n}$ and $k=\rank(\mc{A})=\text{rank}(\bcr(\mc{A}))$. 
\begin{eqnarray*}
 {\bcr}
     (\mc{A})  &=&
 (F_{n}^* \otimes I_{q})\cdot 
    \begin{pmatrix}
     {A}_1 &  &  & \\
     & {A}_2 &  & \\
     & &  \ddots &\\ 
    & &  & {A}_{n}
    \end{pmatrix}  \cdot (F_{n} \otimes I_{p})\\
&=& (F_{n}^* \otimes I_{q})\cdot 
    \begin{pmatrix}
     {(QRP)}_1 &  &  & \\
     & {(QRP)}_2 &  & \\
     & &  \ddots &\\ 
    & &  & {(QRP)}_{n}
    \end{pmatrix}  \cdot (F_{n} \otimes I_{p})\\  
  &=& {\bcr}(\mc{Q}) {\bcr}(\mc{R}) {\bcr}(\mc{P}).
\end{eqnarray*}
\begin{algorithm}[hbt!]
  \caption{Randomized $t$-QR Algorithm (R-$t$-QR)} \label{AlgRQR}
  \begin{algorithmic}[1]
    \Procedure{RQR}{$\mc{S}$}
    \State \textbf{Input} $ \mc{S}\in\cpqn$.
\For{$i \gets 1$ to $n$} 
      \State $\mc{S}=\textup{fft}(\mc{S}, [~ ], i);$
      \EndFor
    \For{$i \gets 1$ to $n$} 
   \State $[\mc{Q}(:,:,i), \mc{R}(:,:,i), \mc{P}(:,:,i)] = \texttt{rqrcp}(\mc{S}(:,:,i))$ 
   \EndFor
   \EndProcedure
  \end{algorithmic}
\end{algorithm}

  \begin{table}[H] 
        \begin{center}
       \caption{Time comparison with Randomized $t$-QR for $cycol$, when $\mc{X}=\mc{S}^{\dagger}$}
          \renewcommand{\arraystretch}{1.2}
    \begin{tabular}{|c|c|c|c|c|}
    \hline
        Size of $\mc{S}$ &  MT$^{t-QR}$ & MT$^{R-t-QR}$ &Error$^{t-QR}$ &Error$^{R-t-QR}$\\ 
        \hline
    \multirow{4}{*}{$400\times 400\times 400$} &  \multirow{4}{*}{43.01} &\multirow{4}{*}{33.19}&
    $\mc{E}^{t}_{1}= 6.630e^{-12}$  & $\mc{E}^{t}_{1}=6.634e^{-12}$ \\     
&   & &$\mc{E}^{t}_{2}= 1.046e^{-17}$  &$\mc{E}^{t}_{2}=1.4674e^{-17}$ \\
&   & &$\mc{E}^{t}_{3}=1.476e^{-14}$   &$\mc{E}^{t}_{3}=1.607e^{-14}$ \\
&   & &$\mc{E}^{t}_{4}=2.205e^{-14}$  &$\mc{E}^{t}_{4}=1.339e^{-13}$ \\
    \hline
    \multirow{4}{*}{$500\times 500\times 500$} &   \multirow{4}{*}{83.34} &\multirow{4}{*}{65.80}&
    $\mc{E}^{t}_{1}= 1.036e^{-12}$  &     $\mc{E}^{t}_{1}=1.038e^{-11}$ \\     
&   & &$\mc{E}^{t}_{2}= 1.387e^{-17}$ &$\mc{E}^{t}_{2}=1.456e^{-17}$ \\&  & &
$\mc{E}^{t}_{3}=1.736e^{-14}$  &$\mc{E}^{t}_{3}=1.965e^{-14}$ \\
&   & &$\mc{E}^{t}_{4}=2.309e^{-14}$ &$\mc{E}^{t}_{4}=1.830e^{-13}$ \\
         \hline
    \end{tabular}
    \label{tab:rqr-cycol}  
          \end{center}
\end{table}

Table~\ref{tab:error-rqq-cycol-group} represents both the accuracy and low running time for randomized algorithm \ref{AlgRQR}. We have seen that the randomized tensor QR decomposition for computing the outer inverse, specifically the Moore–Penrose and Group inverses, performs better in terms of running time and residual errors. Further, if we choose the appropriate $k$, it may perform better. The errors and computational running time for the matlab function cycol has been worked out numerically. However, one can use any other matrices with appropriate $k \leq \rank(\mc{A})$. For choosing suitable value $k$ one can see \cite{zhu2022tensor}. 
  \begin{table}[H] 
        \begin{center}
       \caption{Time comparison with Randomized $t$-QR for $cycol$, when $\mc{X}=\mc{S}^{\#}$}
          \renewcommand{\arraystretch}{1.2}
    \begin{tabular}{|c|c|c|c|c|}
    \hline
        Size of $\mc{S}$ &  MT$^{t-QR}$ & MT$^{R-t-QR}$ &Error$^{t-QR}$ &Error$^{R-t-QR}$\\ 
        \hline
    \multirow{4}{*}{$400\times 400\times 400$} &  \multirow{4}{*}{32.54} &\multirow{4}{*}{27.19}&
    $\mc{E}^{t}_{1}= 2.631e^{-08}$  &     $\mc{E}^{t}_{1}=3.212e^{-08}$ \\     
&   & &$\mc{E}^{t}_{2}= 6.867e^{-10}$    &$\mc{E}^{t}_{2}=1.304e^{-09}$ \\
&   & &$\mc{E}^{t}_{5}=8.519e^{-10}$    &$\mc{E}^{t}_{5}=3.104e^{-09}$ \\
    \hline
    \multirow{4}{*}{$500\times 500\times 500$} &   \multirow{4}{*}{64.03} &\multirow{4}{*}{56.11}&
    $\mc{E}^{t}_{1}=6.128 e^{-08}$  & $\mc{E}^{t}_{1}=6.344e^{-08}$ \\     
&   & &$\mc{E}^{t}_{2}= 1.622e^{-09}$  &$\mc{E}^{t}_{2}=2.364e^{-09}$ \\&  & &
$\mc{E}^{t}_{5}=8.886^{-10}$  &$\mc{E}^{t}_{5}=1.783e^{-09}$ \\
         \hline
    \end{tabular}
    \label{tab:error-rqq-cycol-group}  
          \end{center}
\end{table}

\newpage


\section{Conclusion}
The outer inverse of a tensor with specified range and nullity conditions are developed and analysed. Appropriate algorithms are designed and the computation of the outer inverse is computed with the help of the tensor QR decomposition. Subsequently, we discuss the relation of this outer inverse with other generalized inverses such as Moore–Penrose inverse, group inverse, and Drazin inverse. The generality of the developed algorithm is reflected in the capability that many kinds of other generalized inverses can be obtained as appropriate special cases. This gives us the flexibility to choose generalized inverses depending on applications. We have worked out a few numerical examples to illustrate the QR algorithm in both tensor and matrix structured domain. The computations \cite{Duersch2020randomized}  in tensor framework under $t$-product is more efficient as observed in the examples.
\section*{ Author contribution}
The three authors contributed equally to this work.

\section*{Data Availability Statement}
Data sharing is not applicable to this article as no datasets were generated or analyzed during the current study.


\section*{Funding}
\begin{itemize}
    \item Ratikanta Behera is grateful for the support of the Science and Engineering Research Board (SERB), Department of Science and Technology, India, under Grant No. EEQ/2022/001065. 
\item Jajati Keshari Sahoo is grateful for the support of the Science and Engineering Research Board (SERB), Department of Science and Technology, India, under Grant No. SUR/2022/004357. 
\item Yimin Wei is grateful for the support of  the Shanghai Municipal Science and Technology Commission under grant 23WZ2501400 and the Ministry
of Science and Technology of China under grant G2023132005L.

\end{itemize}

\section*{Declarations}

Ethical approval.  Not applicable.\\
Conflicts of Interest.  The authors declare no competing interests.

\newpage

\bibliographystyle{abbrv}
\bibliography{References}

\end{document}